\newtheorem{thm}{Theorem}[section]
\newtheorem{prop}[thm]{Proposition}
\newtheorem{lem}[thm]{Lemma}
\newtheorem{rem}[thm]{Remark}
\newtheorem{cor}[thm]{Corollary}
\newtheorem{defn}[thm]{Definition}
\newcommand{\C}{\mathbb{C}}
\newcommand{\N}{\mathbb{N}}
\newcommand{\Z}{\mathbb{Z}}
\begin{document}

\title{Rigidity of $\ell^p$ Roe-type algebras}

\author{Yeong Chyuan Chung$^{1}$ \and Kang Li$^{2}$}

\thanks{{$^{1}$} Supported by the European Research Council (ERC-677120).}

\thanks{{$^{2}$} Supported by the Danish Council for Independent Research (DFF-5051-00037) and partially supported by the DFG (SFB 878).}

\begin{abstract}
We investigate the rigidity of the $\ell^p$ analog of Roe-type algebras. In particular, we show that if $p\in[1,\infty)\setminus\{2\}$, then an isometric isomorphism between the $\ell^p$ uniform Roe algebras of two metric spaces with bounded geometry yields a bijective coarse equivalence between the underlying metric spaces, while a stable isometric isomorphism yields a coarse equivalence. We also obtain similar results for other $\ell^p$ Roe-type algebras. In this paper, we do not assume that the metric spaces have Yu's property A or finite decomposition complexity.
\end{abstract}

\date{\today}
\maketitle

\parskip 4pt

\noindent\textit{Mathematics Subject Classification} (2010): 46L85, 51K05, 46H15, 46H20 \\


\section{Introduction}

Roe-type algebras are $C^*$-algebras associated to discrete metric spaces and they encode the coarse (or large-scale) geometry of the underlying metric spaces, and they have been well-studied, providing a link between coarse geometry of metric spaces and operator algebra theory (e.g., \cite{ALLW17, MR1876896, MR1739727, MR3158244, LL, LW18, MR1763912, MR2873171, Scarparo:2016kl, MR1905840, MR2800923, WZ10}). They feature in the (uniform) coarse Baum-Connes conjecture (e.g., \cite{MR1905840, MR2523336, Yu95, MR1451759, Yu00}), and have also recently been found to be useful in the study of topological phases of matter (e.g., \cite{Kub,EwertMeyer}). A natural question that arises is about the rigidity of these algebras, namely how well the $C^\ast$-algebra encodes the coarse geometry of the metric space, or whether the coarse geometry of a metric space can be recovered from its Roe algebra, and this was well studied in \cite{MR3116573, WW}. 

We will consider metric spaces with bounded geometry defined as follows: 

\begin{defn}
Let $X$ be a metric space. Then $X$ is said to have \emph{bounded geometry} if for all $R\geq 0$ there exists $N_R\in\mathbb{N}$ such that for all $x\in X$, the ball of radius $R$ about $x$ has at most $N_R$ elements.
\end{defn}

Note that every metric space with bounded geometry is necessarily countable and discrete. The simplest examples of such metric spaces are finitely generated discrete groups equipped with word metrics. Other interesting examples are box spaces of finitely generated residually finite groups (see e.g. \cite[Definition~6.3.2]{MR2562146}). We are particularly interested in (bijective) coarse equivalence classes of such metric spaces in the following sense:

\begin{defn}
Let $X$ and $Y$ be metric spaces. 
\begin{itemize}
\item A (not necessarily continuous) map $f:X\rightarrow Y$ is said to be \emph{uniformly expansive} if for all $R>0$ there exists $S>0$ such that if $x_1,x_2\in X$ satisfy $d(x_1,x_2)\leq R$, then $d(f(x_1),f(x_2))\leq S$.

\item Two maps $f,g:X\rightarrow Y$ are said to be \emph{close} if there exists $C>0$ such that $d(f(x),g(x))\leq C$ for all $x\in X$.

\item Two metric spaces $X$ and $Y$ are said to be \emph{coarsely equivalent} if there exist uniformly expansive maps $f:X\rightarrow Y$ and $g:Y\rightarrow X$ such that $f\circ g$ and $g\circ f$ are close to the identity maps, respectively. In this case, we say both $f$ and $g$ are \emph{coarse equivalences} between $X$ and $Y$.

\item We say a map $f:X\to Y$ is a \emph{bijective coarse equivalence} if $f$ is both a coarse equivalence and a bijection. In this case, we say $X$ and $Y$ are \emph{bijectively coarsely equivalent}.

\end{itemize}
\end{defn}

It was shown in \cite[Theorem 4]{BNW} that if $X$ and $Y$ are coarsely equivalent metric spaces with bounded geometry, then their uniform Roe $C^\ast$-algebras are Morita equivalent\footnote{Note that for $\sigma$-unital (in particular unital) $C^\ast$-algebras, being Morita equivalent is the same as being stably $*$-isomorphic \cite[Theorem 1.2]{BGR}.}. A partial converse was obtained in \cite{MR3116573} under the assumption that the metric spaces have Yu's property A (see \cite[Definition 2.1]{Yu00}), which can be regarded as a coarse variant of amenability. In fact, it was shown in \cite{MR3116573} that under the assumption of property A, $*$-isomorphisms between Roe-type $C^\ast$-algebras yield coarse equivalences:

\begin{thm}\cite[Theorem 4.1, Theorem 6.1 and Corollary 6.2]{MR3116573}
Let $X$ and $Y$ be metric spaces with bounded geometry and property A. 
\begin{enumerate}
\item $X$ and $Y$ are coarsely equivalent if and only if their uniform Roe $C^\ast$-algebras are stably $*$-isomorphic.
\item If $A^\ast(X)$ and $A^\ast(Y)$ are Roe-type $C^\ast$-algebras associated to $X$ and $Y$ respectively, and there is a $*$-isomorphism between $A^\ast(X)$ and $A^\ast(Y)$, then $X$ and $Y$ are coarsely equivalent. The same conclusion holds for the stable uniform Roe $C^\ast$-algebra.

The converse is true for $UC^\ast(X)$, $C^\ast(X)$, and $C^\ast_s(X)$.
\end{enumerate}
\end{thm}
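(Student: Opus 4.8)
The plan is to reduce everything to a rigidity statement for the canonical maximal abelian subalgebra of the uniform Roe algebra. Recall that $UC^\ast(X)$ contains a canonical copy of $\ell^\infty(X)$ as a maximal abelian $\ast$-subalgebra, equipped with a faithful conditional expectation $E_X\colon UC^\ast(X)\to\ell^\infty(X)$ given by restriction to the diagonal, and that the compact operators $\mathcal K(\ell^2X)$ form an ideal of $UC^\ast(X)$. The main lemma I would aim for is the following: \emph{if $X$ and $Y$ have property A, then for every $\ast$-isomorphism $\Phi\colon UC^\ast(X)\to UC^\ast(Y)$ there is a unitary $u\in UC^\ast(Y)$ with $(\operatorname{Ad}u)\circ\Phi\bigl(\ell^\infty(X)\bigr)=\ell^\infty(Y)$}. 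Everything after this is comparatively soft, so I would organize the proof around it.

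To prove the lemma, I would first show $\Phi(\mathcal K(\ell^2X))=\mathcal K(\ell^2Y)$; this is where property A enters first, through the theorem (valid under property A) that every ghost operator in a uniform Roe algebra is compact, which pins down $\mathcal K$ intrinsically. Hence $q_x:=\Phi(\chi_{\{x\}})$ is a rank-one projection on $\ell^2Y$, say $q_x=\eta_x\eta_x^\ast$. The crux is the \emph{localization} of the $\eta_x$: using the operator norm localization property (known to be equivalent to property A for bounded geometry spaces) I would show that for every $\varepsilon>0$ and every $x\in X$ there is a point $f(x)\in Y$ with $\|\chi_{\{f(x)\}}\eta_x\|\geq1-\varepsilon$, i.e.\ $q_x$ is an $\varepsilon$-perturbation of the point projection $\chi_{\{f(x)\}}$, the passage $x\mapsto f(x)$ being controlled because $\Phi$ intertwines the finite-propagation structure. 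Once the orthogonal families $\{q_x\}_{x\in X}$ and $\{\chi_{\{f(x)\}}\}_{x\in X}$ are uniformly close, a standard perturbation argument produces a unitary $u\in UC^\ast(Y)$ conjugating one onto the other; then $(\operatorname{Ad}u)\circ\Phi$ carries each $\chi_{\{x\}}$ to $\chi_{\{f(x)\}}$, hence $\ell^\infty(X)$ onto $\ell^\infty(Y)$.

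Replacing $\Phi$ by $(\operatorname{Ad}u)\circ\Phi$, we may assume $\Phi$ restricts to a $\ast$-isomorphism $\ell^\infty(X)\to\ell^\infty(Y)$ with $\Phi(\chi_{\{x\}})=\chi_{\{f(x)\}}$; dually this is a homeomorphism $\beta X\to\beta Y$, and $f$ is its restriction to isolated points, so $f$ is a bijection. To see $f$ is a coarse equivalence I would test $\Phi$ on the finite-propagation partial isometries $v$ with $vv^\ast,v^\ast v\in\ell^\infty(X)$ — the partial translations, which together with $\ell^\infty(X)$ densely generate $UC^\ast(X)$. If $v$ implements a displacement bounded by $R$, then $\Phi(v)$ is a partial isometry with source and range projections in $\ell^\infty(Y)$ and, using property A once more to control the propagation of images of finite-propagation operators, is essentially of finite propagation; unwinding the conjugacy forces $d(f(x),f(x'))$ to be uniformly bounded whenever $d(x,x')\leq R$, and symmetrically for $f^{-1}$. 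Thus $f$ is a bijective coarse equivalence. Combined with the construction of \cite{BNW} (a coarse equivalence gives a stable $\ast$-isomorphism, a bijective one an honest $\ast$-isomorphism), this yields item (1) with its bijective refinement and the non-stable half of item (2).

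The remaining algebras follow the same scheme with a larger diagonal. For the stable uniform Roe algebra one works with the natural abelian subalgebra $\ell^\infty(X)\otimes c_0(\mathbb{N})$ of $UC^\ast(X)\otimes\mathcal K$ and extracts, from the induced homeomorphism of spectra and from the translation structure, a coarse equivalence $X\to Y$ — no longer bijective, because of the extra room in the $\mathbb{N}$-direction. The Roe algebras $C^\ast(X)$ and $C^\ast_s(X)$ are, for bounded geometry $X$, $\ast$-isomorphic to stabilizations built from $UC^\ast(X)$, hence covered as well, while the converse implications for $UC^\ast$, $C^\ast$ and $C^\ast_s$ come from the explicit $\ast$-homomorphisms associated to a coarse equivalence, as in \cite{BNW}. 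The step I expect to be genuinely hard — and the one that makes property A indispensable — is the localization of the $\eta_x$ in the second paragraph: controlling, uniformly in $x$, how far a $\ast$-isomorphism can move a minimal diagonal projection away from being concentrated at a point. Without an amenability-type hypothesis there is no mechanism either to exclude ``spread-out'' images of point projections or to patch the resulting perturbations into a single unitary.
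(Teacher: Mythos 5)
This statement is quoted from \cite{MR3116573} and is not proved in the paper; the only indication here of how its proof goes is the remark following Proposition \ref{Lam}, which records that property A (via the metric sparsification property) is used to show that the implementing unitary $U:\ell^2(X)\to\ell^2(Y)$ satisfies: there is a \emph{fixed} constant $c>0$ such that for every $x$ there is $f(x)\in Y$ with $|\langle U\delta_x,\delta_{f(x)}\rangle|\geq c$, and similarly for $U^*$. Your overall skeleton (spatial implementation via $\Phi(\mathcal K)=\mathcal K$, localization of the images of point projections using property A, then a coarse-geometric argument on partial translations) is the right one, but your formulation of the localization step contains a genuine error. You assert that for \emph{every} $\varepsilon>0$ one can find $f(x)$ with $\|\chi_{\{f(x)\}}\eta_x\|\geq 1-\varepsilon$ uniformly in $x$, and you then use a perturbation argument to produce a unitary $u$ conjugating $\Phi(\ell^\infty(X))$ onto $\ell^\infty(Y)$, concluding that $f$ is a bijection. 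This is false as stated: take $X=Y=\Z$ and let $U$ be the unitary acting as a $45^\circ$ rotation on $\operatorname{span}\{\delta_0,\delta_1\}$ and as the identity elsewhere; then $\operatorname{Ad}(U)$ is an automorphism of the uniform Roe algebra sending $\chi_{\{0\}}$ to a rank-one projection whose range vector $\eta_0$ satisfies $\|\chi_{\{y\}}\eta_0\|=1/\sqrt{2}$ for $y=0,1$, so no choice of $f(0)$ achieves $1-\varepsilon$ for small $\varepsilon$. Operator norm localization only delivers the fixed constant $c$, not $1-\varepsilon$; with only that, the map $f$ need not be injective or surjective, the orthogonal families $\{q_x\}$ and $\{\chi_{\{f(x)\}}\}$ are not uniformly close, and the unitary $u$ cannot be constructed. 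What one actually gets is that $f$ and the analogous map $f'$ built from $U^*$ are uniformly expansive and mutually close to inverses of each other --- i.e., a coarse equivalence, which is exactly (and only) what the theorem claims. Your route, if it worked, would prove bijective coarse equivalence from a plain $*$-isomorphism under property A, a substantially stronger statement than \cite[Theorem 4.1]{MR3116573} that is not obtained there.

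Two secondary points. First, $C^\ast(X)$ is not a stabilization built from $UC^\ast(X)$ or $C_u^\ast(X)$: only $C^\ast_s(X)\cong C_u^\ast(X)\otimes\mathcal K$ holds, while the Roe algebra $C^\ast(X)$ (whose blocks $T_{xy}$ are arbitrary compact operators) must be treated directly, with the unitary acting on $\ell^2(X;\ell^2)$ and the coarse equivalence extracted from a map $X\times\N\to Y\times\N$, as is done for the $\ell^p$ analogues in Section 3 of this paper. Second, $\ell^\infty(X)$ is maximal abelian in the uniform Roe algebra $C_u^\ast(X)$ but not in the uniform algebra $UC^\ast(X)\subseteq B(\ell^2(X;\ell^2))$, so the opening reduction needs to be stated for the correct algebra or adapted to the larger diagonal from the outset.
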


If there is a bijective coarse equivalence between two metric spaces with bounded geometry, then the proof of \cite[Theorem 4]{BNW} or \cite[Proposition~2.3]{LL} shows that their uniform Roe $C^\ast$-algebras are $*$-isomorphic. A partial converse to this statement was obtained in \cite{WW} under the assumption that the metric spaces have finite decomposition complexity as defined in \cite{GTY}.

\begin{thm}\cite[Corollary 1.16]{WW}
Let $X$ and $Y$ be metric spaces with bounded geometry. Suppose $X$ and $Y$ have finite decomposition complexity. Then $X$ and $Y$ are bijectively coarsely equivalent if and only if their uniform Roe $C^\ast$-algebras are $*$-isomorphic.
\end{thm}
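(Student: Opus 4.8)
The forward implication requires neither bounded geometry nor finite decomposition complexity. If $f\colon X\to Y$ is a bijective coarse equivalence, let $U_f\colon\ell^2(X)\to\ell^2(Y)$ be the unitary determined by $\delta_x\mapsto\delta_{f(x)}$. Since $f$ is uniformly expansive, $\mathrm{Ad}_{U_f}$ maps finite-propagation operators to finite-propagation operators; since $f^{-1}$ is also uniformly expansive, the same holds for $U_f^{-1}=U_{f^{-1}}$, so $\mathrm{Ad}_{U_f}$ restricts to a $*$-isomorphism of the norm closures $C^*_u(X)\cong C^*_u(Y)$. This is the argument of \cite[Theorem~4]{BNW} and \cite[Proposition~2.3]{LL}. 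So the real content is the converse, which I would prove in three stages. \emph{Stage 1 (spatial implementation).} Given a $*$-isomorphism $\Phi\colon C^*_u(X)\to C^*_u(Y)$, note first that $\Phi$ carries $\mathcal K(\ell^2(X))$ onto $\mathcal K(\ell^2(Y))$: a projection $p\in C^*_u(X)$ is finite-rank precisely when the corner $pC^*_u(X)p$ is finite-dimensional, a property preserved by $\Phi$, and $\mathcal K$ is the norm-closed ideal generated by such $p$. Hence $\Phi|_{\mathcal K}$ is a $*$-isomorphism of the compacts, so it is spatially implemented by a unitary $U\colon\ell^2(X)\to\ell^2(Y)$, and a standard non-degeneracy argument (using $C^*_u(X)\subseteq B(\ell^2(X))=M(\mathcal K(\ell^2(X)))$) upgrades this to $\Phi=\mathrm{Ad}_U$ on all of $C^*_u(X)$; in particular $UC^*_u(X)U^*=C^*_u(Y)$.

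\emph{Stage 2 (extracting a coarse equivalence).} View $U$ as a $Y\times X$ matrix. The inclusions $U\,\ell^\infty(X)\,U^*\subseteq C^*_u(Y)$ and $U^*\,\ell^\infty(Y)\,U\subseteq C^*_u(X)$, combined with bounded geometry, force $U$ and $U^*$ to be approximately localised. This is the one place the hypothesis is used: spaces of finite decomposition complexity are known to satisfy the operator norm localisation property (cf.\ \cite{GTY}), and from it one deduces that for each $\varepsilon\in(0,1)$ there is $R>0$ such that every $x\in X$ admits a point $f(x)\in Y$ with $\bigl\|\chi_{B_Y(f(x),R)}\,U\delta_x\bigr\|>1-\varepsilon$, and symmetrically points $g(y)\in X$ for $U^*$. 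Fixing $\varepsilon$ small, the assignments $f$ and $g$ are well defined up to uniformly bounded ambiguity, uniformly expansive, and mutually close to inverses; thus $f$ is a coarse equivalence $X\to Y$. This is the mechanism of \cite[Theorem~4.1]{MR3116573}, with property A there replaced by finite decomposition complexity here.

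\emph{Stage 3 (bijectivity).} Since $f$ arises from a unitary rather than merely an isometry, cardinality is approximately preserved at every scale. For the relation $\{(x,y):d(f(x),y)\le R\}$ one reads off, from $\|U\|=\|U^*\|=1$, bounded geometry, and the localisation estimate, that every finite $F\subseteq X$ satisfies $|f(F)|\ge|F|$ up to a controlled defect, and likewise on the $Y$ side; a Hall's-marriage / back-and-forth argument (as in \cite[Corollary~1.16]{WW}) then perturbs $f$ to a bijection $\tilde f\colon X\to Y$ with $\sup_{x\in X}d(\tilde f(x),f(x))<\infty$. A map close to a coarse equivalence is again a coarse equivalence, so $\tilde f$ is the desired bijective coarse equivalence. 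The main obstacle is Stage 2 — showing that the implementing unitary essentially moves each point mass into a bounded region; this is the delicate analytic point and is precisely where finite decomposition complexity, via operator norm localisation, is brought to bear, whereas Stage 3, though combinatorially fiddly, is routine once those estimates and bounded geometry are available.
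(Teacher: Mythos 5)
The paper does not actually prove this statement: it is quoted verbatim from \cite[Corollary 1.16]{WW} as background, so there is no internal proof to compare against, and your proposal has to be judged on its own terms. The forward direction and Stage 1 are correct and complete; Stage 1 is essentially the same spatial-implementation argument the present paper runs in the $\ell^p$ setting (Lemma~2.1), specialised to $p=2$.

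The genuine gaps are in Stages 2 and 3. In Stage 2, operator norm localisation is a statement about finite-propagation operators, whereas $U$ has no propagation; the standard route is to apply it to the rank-one projections $Ue_{xx}U^*\in C^*_u(Y)$ after replacing them by finite-propagation approximants, and the whole difficulty is making the propagation bound and the approximation quality \emph{uniform over $x$}. That uniformity is exactly the content of the key lemmas of \cite{MR3116573}, and you assert it rather than establish it. Moreover, since finite decomposition complexity implies property A \cite{GTY}, everything you use in Stage 2 already follows from \cite{MR3116573}; so if FDC is the operative hypothesis it must be doing work your sketch has not located. In Stage 3 the defect is fatal as stated: Hall's marriage theorem for the relation $\{(x,y):d(f(x),y)\le R\}$ requires the \emph{exact} inequality $|F|\le|N_R(f(F))|$ for every finite $F$ (together with its mirror); an inequality ``up to a controlled defect'' only yields matchings omitting a proportion of each finite set and does not produce a bijection. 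Extracting the exact inequality from $\|\chi_{B(f(x),R)}U\delta_x\|^2\ge 1-\varepsilon$ is itself delicate (the naive count gives only $(1-\varepsilon)|F|\le|N_R(f(F))|$, which is strictly weaker for large $F$), and you defer precisely this step to \cite[Corollary~1.16]{WW} --- the statement being proven --- which is circular. For the record, the route in \cite{WW} is different: FDC is used to prove a uniqueness theorem for Cartan subalgebras of $C^*_u(Y)$, which conjugates $U\ell^\infty(X)U^*$ onto $\ell^\infty(Y)$ by a unitary $w\in C^*_u(Y)$; the corrected unitary $wU$ then sends each $\delta_x$ to a unimodular multiple of $\delta_{\tilde f(x)}$ for a genuine bijection $\tilde f$, and no marriage argument is needed. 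Your counting strategy can be made to work (it is the mechanism used in later hypothesis-free treatments), but as written both of its load-bearing steps are missing.
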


In the purely algebraic setting, it was shown in \cite[Theorem~5.1 and Theorem~6.1]{MR3116573}, \cite{WW} and \cite[Theorem~8.1]{BF18} that the results above hold without requiring property A or finite decomposition complexity as follows:
\begin{thm}
Let $X$ and $Y$ be metric spaces with bounded geometry. 
\begin{enumerate}
\item $X$ and $Y$ are bijectively coarsely equivalent if and only if their algebraic uniform Roe algebras are $*$-isomorphic.
\item If $A[X]$ and $A[Y]$ are algebraic Roe-type algebras associated to $X$ and $Y$ respectively, and there is a $*$-isomorphism between $A[X]$ and $A[Y]$, then $X$ and $Y$ are coarsely equivalent. The same conclusion holds for the algebraic stable uniform Roe algebra.

The converse is true for $U\mathbb{C}[X]$, $\mathbb{C}[X]$, and $\mathbb{C}_s[X]$.
\end{enumerate}
\end{thm}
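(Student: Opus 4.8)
The plan is to prove the statement by reducing it to the $C^\ast$-algebraic results already cited, combined with a density/rigidity argument that passes between the algebraic Roe algebras and their completions. Concretely, for part (1), the forward direction is classical: a bijective coarse equivalence $f\colon X\to Y$ induces a unitary $U_f\colon \ell^2(X)\to\ell^2(Y)$ (the permutation-type operator $\delta_x\mapsto\delta_{f(x)}$), and conjugation by $U_f$ carries finite-propagation operators on $X$ to finite-propagation operators on $Y$ (using uniform expansiveness of $f$ and its coarse inverse together with bounded geometry to control propagation), hence restricts to a $*$-isomorphism $U\mathbb{C}[X]\to U\mathbb{C}[Y]$. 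For the converse, I would take a $*$-isomorphism $\Phi\colon U\mathbb{C}[X]\to U\mathbb{C}[Y]$ and show it extends to a $*$-isomorphism of the uniform Roe $C^\ast$-algebras $UC^\ast(X)\to UC^\ast(Y)$; then the algebraic Theorem~1.? of \cite{MR3116573}/\cite{WW}/\cite{BF18} is exactly what produces the bijective coarse equivalence. So the crux is an extension statement.

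The key steps, in order, are: (i) recall that $U\mathbb{C}[X]$ sits densely inside $UC^\ast(X)$ and carries a canonical faithful conditional-expectation-type structure (the diagonal restriction map, or equivalently the matrix coefficients $\langle T\delta_x,\delta_y\rangle$), which is purely combinatorial and therefore preserved by any algebraic $*$-isomorphism up to the structure it induces on the target; (ii) use the characterization of the $C^\ast$-norm on $U\mathbb{C}[X]$ as the norm of the operator on $\ell^2(X)$, and argue that an algebraic $*$-isomorphism is automatically bounded for these norms — this is where one invokes that a $*$-homomorphism between pre-$C^\ast$-algebras with the minimal/spatial norm is contractive, or alternatively that $U\mathbb{C}[X]$ has a unique $C^\ast$-completion because its $C^\ast$-norm is the enveloping norm (using the amenability-free results on uniqueness of the Roe-algebra norm); (iii) conclude that $\Phi$ extends to $UC^\ast(X)\cong UC^\ast(Y)$ and likewise for the stable versions $\mathbb{C}_s[X]$ and the Roe-type variants $\mathbb{C}[X]$; (iv) invoke the cited $C^\ast$-theorems to extract, respectively, a bijective coarse equivalence (uniform case) or a coarse equivalence (stable/Roe case), and assemble the "if and only if" using the forward direction sketched above. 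Finally, for the clause "the converse is true for $U\mathbb{C}[X]$, $\mathbb{C}[X]$, and $\mathbb{C}_s[X]$", I would run the induced-unitary argument from the forward direction of (1), noting that a coarse equivalence (not necessarily bijective), via the standard trick of thickening to a bijection between $X$ and a net in $Y$ and using bounded geometry, gives a stable $*$-isomorphism and hence a $*$-isomorphism after tensoring with compact-type matrix algebras.

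The main obstacle I expect is step (ii): showing that an \emph{abstract} algebraic $*$-isomorphism $\Phi\colon U\mathbb{C}[X]\to U\mathbb{C}[Y]$ is automatically continuous for the $C^\ast$-norms, with no property~A or FDC hypothesis available to supply amenability-type rigidity. One cannot simply quote "$*$-homomorphisms between $C^\ast$-algebras are contractive" because the domains are only dense $*$-subalgebras. The way around this is that $U\mathbb{C}[X]$, as a $*$-algebra, already "knows" its spatial representation: the matrix units $e_{xy}$ for $x,y$ at bounded distance generate it, they satisfy the relations of a groupoid $*$-algebra, and the $C^\ast$-norm is the unique $C^\ast$-norm making the canonical diagonal $\ell^\infty$-subalgebra isometrically embedded — a fact that holds \emph{without} amenability because the uniform Roe algebra is a reduced, not full, crossed-product-like object and the diagonal is a Cartan-type masa whose induced norm is rigid. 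I would phrase this as: any $*$-isomorphism preserves the diagonal $\ell^\infty(X)$ (since it is characterized order-theoretically or as a maximal abelian subalgebra with a specific lattice of projections), hence preserves the "support at $x$" structure, hence is spatially implemented on a cofinite piece and therefore isometric. Making this precise — essentially re-deriving the diagonal-preservation and norm-rigidity lemmas in the algebraic setting — is the technical heart, and is precisely the content already established in \cite[Theorem~5.1 and Theorem~6.1]{MR3116573}, \cite{WW}, and \cite[Theorem~8.1]{BF18}, so in the write-up this amounts to assembling and citing those results rather than reproving them.
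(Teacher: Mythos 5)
There is a genuine gap, and it sits exactly where you flagged the "main obstacle," but the problem is worse than an obstacle: the reduction itself targets the wrong theorems. Your step (iv) proposes to extend the algebraic $*$-isomorphism to the $C^\ast$-completions and then invoke the $C^\ast$-algebraic rigidity results to extract the (bijective) coarse equivalence. But those $C^\ast$-level results are precisely the ones that \emph{require} property~A (for \cite[Theorem~4.1]{MR3116573}) or finite decomposition complexity (for \cite[Corollary~1.16]{WW}), and the whole content of the statement at hand is that the algebraic version holds \emph{without} those hypotheses. So even if the extension in steps (i)--(iii) were carried out, you would land on theorems you are not entitled to apply. The paper does not prove this statement by passing to completions; it records it as a citation of \cite[Theorems~5.1 and~6.1]{MR3116573}, \cite{WW} and \cite[Theorem~8.1]{BF18}, whose proofs work \emph{directly at the algebraic level}: the $*$-isomorphism is spatially implemented by a unitary (via the rank-one idempotent argument, as in Lemma~\ref{spatially implemented} here), and then one exploits the fact that images of matrix units, and of strongly convergent sparse sums of matrix units, have \emph{exactly} finite propagation in the uncompleted target algebra. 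That exact finite propagation is the substitute for property~A/FDC, and it is lost the moment you complete. Your closing paragraph half-concedes this by saying the content "is precisely the content already established" in the algebraic papers, but that is not a reduction to the $C^\ast$-case --- it is the theorem itself.

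Two further points. First, in step (ii) you do not actually need the unsubstantiated claim that the algebraic uniform Roe algebra has a unique $C^\ast$-completion (a statement that is, to my knowledge, open in general without property~A and closely tied to the maximal-versus-reduced question): automatic continuity follows from spatial implementation alone, since the algebras contain all operators $\xi\otimes\delta_{x_0}$, so a $*$-isomorphism is conjugation by a bounded invertible, hence by a unitary. But continuity is not the issue; the issue is what you do after extending. Second, the forward direction and the final "converse" clause (a bijective coarse equivalence induces a $*$-isomorphism of $U\mathbb{C}[X]$, and a coarse equivalence induces one of $U\mathbb{C}[X]$, $\mathbb{C}[X]$, $\mathbb{C}_s[X]$ via the thickening construction of \cite[Theorem~4]{BNW}) are fine and match the standard argument; the gap is confined to the converse of the rigidity implications.
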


In this paper, we will study the rigidity problem for the $\ell^p$ analog of uniform Roe algebras and other Roe-type algebras for $1\leq p<\infty$.

\begin{defn} \label{pRoedef}
Let $(X,d)$ be a metric space with bounded geometry and $1\leq p<\infty$. For an operator $T=(T_{xy})_{x,y\in X}\in B(\ell^p(X))$, where $T_{xy}=(T\delta_y)(x)$, we define the propagation of $T$ to be
\[ \mathop{\rm prop}(T)=\sup\{ d(x,y):x,y\in X,T_{xy}\neq 0 \}\in[0,\infty]. \]
We denote by $\mathbb{C}_u^p[X]$ the unital algebra of all bounded operators on $\ell^p(X)$ with finite propagation. The $\ell^p$ uniform Roe algebra, denoted by $B^p_u(X)$, is defined to be the operator norm closure of $\mathbb{C}_u^p[X]$ in $B(\ell^p(X))$.
\end{defn}

The $\ell^p$ uniform Roe algebra belongs to a class of algebras that we may call $\ell^p$ Roe-type algebras (see Definition \ref{Roetype}). Such algebras were considered in \cite{MR3557774} in connection with criteria for Fredholmness. Other examples of $\ell^p$ Roe-type algebras that may be of particular interest are the following:
\begin{enumerate} \label{egRoetype}
\item The $\ell^p$ uniform algebra of $X$, denoted $UB^p(X)$, is the operator norm closure of the algebra $U\mathbb{C}^p[X]$ of all finite propagation bounded operators $T$ on $\ell^p(X,\ell^p)$ such that there exists $N\in\mathbb{N}$ such that for all $x,y\in X$, we have that $T_{xy}$ is an operator on $\ell^p$ of rank at most $N$.
\item The $\ell^p$ Roe algebra of $X$, denoted $B^p(X)$, is the operator norm closure of the algebra $\mathbb{C}^p[X]$ of all finite propagation bounded operators $T$ on $\ell^p(X,\ell^p)$ such that for all $x,y\in X$, we have $T_{xy}\in\overline{M}^p_\infty$, where $\overline{M}^p_\infty=\overline{\bigcup_{n\in\mathbb{N}}M_n(\mathbb{C})}\subset B(\ell^p(\mathbb{N}))$.
\end{enumerate}

An alternative definition of the $\ell^p$ Roe algebra is to require $T_{xy}\in K(\ell^p(\N))$ for all $x,y\in X$. When $p\in(1,\infty)$, we have $\overline{M}^p_\infty=K(\ell^p(\N))$ so we get the same algebra as above. However, when $p=1$, $\overline{M}^1_\infty$ is strictly contained in $K(\ell^1(\N))$. Nevertheless, we still get an $\ell^p$ Roe-type algebra that is a coarse invariant, and our rigidity result still holds for this algebra.

Another related example that is not quite a Roe-type algebra but exhibits similar behavior is the stable $\ell^p$ uniform Roe algebra of $X$, denoted $B^p_s(X)$, and is the operator norm closure of the algebra $\mathbb{C}^p_s[X]$ of all finite propagation bounded operators $T$ on $\ell^p(X,\ell^p)$ such that there exists a finite-dimensional subspace $E_T\subset\ell^p$ such that for all $x,y\in X$, we have $T_{xy}\in B(E_T)$. 
One can easily check that it is isomorphic to $B^p_u(X)\otimes K(\ell^p(\N))$ for $p\in[1,\infty)$.

We can also give an alternative definition of the stable $\ell^p$ uniform Roe algebra by requiring the existence of some $k\in \N$ such that $T_{xy}\in M_k(\C)$ for all $x,y\in X$. This algebra will then be isomorphic to $B^p_u(X)\otimes\overline{M}^p_\infty$ for $p\in[1,\infty)$, and is different from the algebra defined above only when $p=1$.

It may be worth noting that $\ell^1$ Roe-type algebras are structurally different from $\ell^p$ Roe-type algebras for $p\in(1,\infty)$ in that $\ell^1$ Roe-type algebras may not contain some finite rank operators while $\ell^p$ Roe-type algebras contain all finite rank operators when $p\in(1,\infty)$. An example is given in Remark \ref{rankone} for the $\ell^1$ uniform Roe algebra.

The following is a summary of the main results of this paper:

\begin{thm} (see Theorem \ref{thm1}, Theorem \ref{thm2}, and Theorem \ref{thm:Roetype})
Let $X$ and $Y$ be metric spaces with bounded geometry, and let $p\in [1,\infty)\setminus\{2\}$.
\begin{enumerate}
\item $X$ and $Y$ are bijectively coarsely equivalent if and only if their $\ell^p$ uniform Roe algebras are isometrically isomorphic.
\item $X$ and $Y$ are coarsely equivalent if and only if their $\ell^p$ uniform Roe algebras are stably isometrically isomorphic.
\item If $A^p(X)$ and $A^p(Y)$ are $\ell^p$ Roe-type algebras associated to $X$ and $Y$ respectively, and there is an isometric isomorphism between $A^p(X)$ and $A^p(Y)$, then $X$ and $Y$ are coarsely equivalent. The same conclusion holds if $B^p_s(X)$ and $B^p_s(Y)$ are isometrically isomorphic.

The converse is true for $UB^p(X)$, $B^p(X)$, and $B^p_s(X)$.
\end{enumerate}
\end{thm}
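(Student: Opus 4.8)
The ``only if'' implications are obtained by transporting to the $\ell^p$ setting the constructions of \cite{BNW} and \cite{LL}. A bijective coarse equivalence $f\colon X\to Y$ induces the invertible isometry $U_f\colon\ell^p(X)\to\ell^p(Y)$, $(U_f\xi)(y)=\xi(f^{-1}(y))$; since $f$ and $f^{-1}$ are uniformly expansive, conjugation by $U_f$ takes finite-propagation operators to finite-propagation operators, hence restricts to an isometric isomorphism $B^p_u(X)\cong B^p_u(Y)$ and, acting only in the $X$-variable, to isometric isomorphisms of the corresponding $UB^p$, $B^p$ and $B^p_s$. For a general coarse equivalence $f\colon X\to Y$ with coarse inverse $g$, the bound $d(g(f(x)),x)\le C$ together with bounded geometry shows that $f^{-1}(y)$ lies in the ball of radius $C$ about $g(y)$, so $f$ has uniformly bounded multiplicity; picking an injection $x\mapsto(f(x),i(x))\in Y\times\{1,\dots,N_C\}$ yields an isometric embedding $\ell^p(X)\hookrightarrow\ell^p(Y\times\N)$ whose associated compression identifies $B^p_u(X)$ with a corner of $B^p_u(Y)\otimes K(\ell^p)$. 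A Morita-type argument as in \cite{BNW} then produces the stable isometric isomorphisms and, similarly, the isometric isomorphisms $UB^p(X)\cong UB^p(Y)$, $B^p(X)\cong B^p(Y)$, $B^p_s(X)\cong B^p_s(Y)$.

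For the content-bearing ``if'' directions the point of departure is that $K(\ell^p(X))\subseteq B^p_u(X)$: the finite-rank operators with finitely supported matrices have finite propagation, and their operator-norm closure is all of $K(\ell^p(X))$; moreover $K(\ell^p(X))$ is the closed ideal of $B^p_u(X)$ generated by any single $e_{xx}$, and it is topologically simple. Given an isometric isomorphism $\Phi\colon B^p_u(X)\to B^p_u(Y)$, the image $I:=\Phi\bigl(K(\ell^p(X))\bigr)$ is therefore a nonzero, topologically simple closed ideal of $B^p_u(Y)$; since $IK(\ell^p(Y))\subseteq I\cap K(\ell^p(Y))$ and $K(\ell^p(Y))$ acts nondegenerately on $\ell^p(Y)$, this intersection is nonzero, and simplicity of $I$ and of $K(\ell^p(Y))$ then forces $I=K(\ell^p(Y))$. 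A routine argument shows that each $\Phi(e_{xx})$ is a rank-one idempotent. As in the Hilbert-space case, using the unconditional basis of $\ell^p$, an isometric isomorphism $K(\ell^p(X))\to K(\ell^p(Y))$ is spatially implemented by an invertible isometry $U\colon\ell^p(X)\to\ell^p(Y)$; because $K(\ell^p(X))$ is an essential ideal of $B^p_u(X)$, it follows that $\Phi=\mathrm{Ad}_U$ on all of $B^p_u(X)$.

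Here the hypothesis $p\ne 2$ becomes essential, via the Banach--Lamperti theorem: every surjective linear isometry $U\colon\ell^p(X)\to\ell^p(Y)$ is a weighted composition operator, $(U\xi)(y)=w(y)\,\xi(h(y))$ for a bijection $h\colon Y\to X$ and unimodular weights $w$. Hence $\Phi$ sends $\ell^\infty(X)$ onto $\ell^\infty(Y)$ and each matrix unit $e_{xx'}$ to a scalar multiple of $e_{h^{-1}(x)h^{-1}(x')}$. Combining this with the fact that $\Phi$ maps $B^p_u(X)$ \emph{onto} $B^p_u(Y)$: a finite-propagation operator supported on pairs $(x_n,x_n')$ with $d(x_n,x_n')\le R$ is sent to an operator supported on the pairs $(h^{-1}(x_n),h^{-1}(x_n'))$, which is again of finite propagation, so the distances $d(h^{-1}(x_n),h^{-1}(x_n'))$ are bounded; running this for $\Phi^{-1}$ as well shows that $h$ and $h^{-1}$ are both uniformly expansive, i.e.\ $h$ is a bijective coarse equivalence, which proves~(1). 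Parts~(2) and~(3) follow along the same lines with $\ell^p(X)$ replaced by $\ell^p(X\times\N)$: since $B^p_s(X)=B^p_u(X)\otimes K(\ell^p)$ and every $\ell^p$ Roe-type algebra $A^p(X)$ contains $K(\ell^p(X\times\N))$ as the closed ideal generated by its rank-one idempotents, one again obtains $\Phi=\mathrm{Ad}_U$ for a weighted composition operator arising from a bijection $\tilde h\colon Y\times\N\to X\times\N$. This $\tilde h$ is controlled only relative to the truncations $X\times\{1,\dots,k\}$ and need not restrict to a bijection $X\to Y$, so one instead passes to the corner cut down by the coordinate projection $\Phi(1_{\ell^p(X)}\otimes e_{11})$---which is isometrically isomorphic to $B^p_u(X)$---and, by a bounded-geometry bookkeeping argument with suitable transversals as in the proof of \cite[Theorem~6.1]{MR3116573}, identifies this corner up to a coarse equivalence with $B^p_u(Y')$ for a coarsely dense subspace $Y'\subseteq Y$. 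Applying~(1) to $B^p_u(X)\cong B^p_u(Y')$ gives a bijective coarse equivalence $X\sim Y'$, and composing with the inclusion $Y'\hookrightarrow Y$ yields $X\sim Y$.

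The conceptual heart of the argument is the Banach--Lamperti step: for $p=2$ the implementing operator $U$ is merely unitary and need not be a weighted permutation, which is precisely why property~A or finite decomposition complexity had to be assumed in the $C^\ast$-setting; the $\ell^p$-geometry for $p\ne2$ removes that need. The step I expect to require the most care is the passage, in parts~(2) and~(3), from the weighted composition operator $\tilde h$ on $\ell^p(X\times\N)$ to an honest coarse equivalence between $X$ and $Y$---that is, the corner/transversal argument, including the combinatorial lemma (using bounded geometry together with the constraints forced by $\Phi$ being onto) that produces a well-behaved transversal; a secondary technical point is checking carefully that $\Phi$ preserves the ideal of compact operators and the rank-one idempotents. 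Throughout one may assume that $X$ and $Y$ are infinite, the finite case being immediate.
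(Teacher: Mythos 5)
Your overall skeleton for part (1) --- spatial implementation of the isomorphism, then Lamperti/Banach--Lamperti to get a weighted composition operator, then a propagation argument showing the induced bijection and its inverse are uniformly expansive --- is exactly the paper's route, and that part of your argument is essentially correct. However, there are two genuine problems.

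First, your spatial-implementation step rests on the claim that $K(\ell^p(X))\subseteq B^p_u(X)$ (and likewise that every $\ell^p$ Roe-type algebra contains $K(\ell^p(X\times\N))$), obtained by norm-approximating compacts by finitely supported matrices. This is \emph{false} for $p=1$, which the statement includes: the paper's Remark \ref{rankone}(1) exhibits a rank-one idempotent on $\ell^1(\N)$ (namely $T\xi=(\sum_n\xi_n)\delta_0$) that is not a norm limit of finite-propagation operators, so $K(\ell^1(X))\not\subseteq B^1_u(X)$; relatedly, $\overline{M}^1_\infty\subsetneq K(\ell^1(\N))$. Your ideal-theoretic argument (simplicity of $K$, essentiality, implementation of isomorphisms of $K$) therefore does not get off the ground at $p=1$. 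The paper's Lemma \ref{spatially implemented} sidesteps this entirely by working only with single-column rank-one operators $\xi\otimes\delta_{x_0}$ and $\eta\otimes\delta_y$, which \emph{do} lie in $B^p_u$ for all $p\in[1,\infty)$, and by constructing $U$ directly (boundedness via the Closed Graph Theorem). You would need to either restrict to $p\in(1,\infty)$ or replace your compact-ideal argument by such a direct construction.

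Second, for parts (2) and (3) you propose to reduce to part (1) by cutting to the corner $\Phi(1\otimes e_{11})$ and running a transversal/bookkeeping argument to identify that corner with $B^p_u(Y')$ for a coarsely dense $Y'\subseteq Y$. You yourself flag this as the step needing the most care, and it is not carried out; as written it is a gap, and it is also unnecessary. The paper's Lemma \ref{Lem2s} works directly with the Lamperti bijection $g:X\times\N\to Y\times\N$: set $f(x)=\pi_Y(g(x,0))$ and $f'(y)=\pi_X(g^{-1}(y,0))$, and show by the same strong-convergence/propagation contradiction as in part (1) (after passing to a subsequence tending to infinity so that the relevant base points are distinct) that $f$ and $f'$ are uniformly expansive and that $f\circ f'$, $f'\circ f$ are close to the identities. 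This yields the coarse equivalence with no corner or transversal argument, and applies verbatim to general $\ell^p$ Roe-type algebras and to $B^p_s$. Your sketch of the ``only if'' directions is consistent with the paper's proof of (1) $\Rightarrow$ (2) in Theorem \ref{thm2} and is fine modulo routine details.
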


\begin{cor} (see Corollary \ref{cor2} and Corollary \ref{cor:Roetype})
Let $X$ and $Y$ be metric spaces with bounded geometry, and let $p\in [1,\infty)$.
\begin{enumerate}
\item $X$ and $Y$ are bijectively coarsely equivalent if and only if $\mathbb{C}^p_u[X]$ and $\mathbb{C}^p_u[Y]$ are isometrically isomorphic.
\item If $A^p[X]$ and $A^p[Y]$ are algebraic $\ell^p$ Roe-type algebras associated to $X$ and $Y$ respectively, and there is an isometric isomorphism between $A^p[X]$ and $A^p[Y]$, then $X$ and $Y$ are coarsely equivalent. The same conclusion holds if $\mathbb{C}^p_s[X]$ and $\mathbb{C}^p_s[Y]$ are isometrically isomorphic.

The converse is true for $U\mathbb{C}^p[X]$, $\mathbb{C}^p[X]$, and $\mathbb{C}^p_s[X]$.
\end{enumerate}
\end{cor}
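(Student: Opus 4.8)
The plan is to deduce this corollary from Theorems \ref{thm1}, \ref{thm2}, and \ref{thm:Roetype} by a routine completion argument, so that no new coarse geometry enters. The fact to exploit is that, by the very definitions, $\mathbb{C}^p_u[X]$ is an operator-norm-dense subalgebra of $B^p_u(X)$, $\mathbb{C}^p_s[X]$ is dense in $B^p_s(X)$, and each algebraic $\ell^p$ Roe-type algebra $A^p[X]$ is dense in the corresponding $\ell^p$ Roe-type algebra $A^p(X)$.

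For the forward implications, I would note that an isometric algebra isomorphism $\varphi$ between two such dense subalgebras is an isometry with isometric inverse, hence uniformly continuous, hence extends uniquely to a linear isometric bijection $\bar\varphi$ between the completions; joint continuity of multiplication on bounded sets makes $\bar\varphi$ and $\overline{\varphi^{-1}}$ multiplicative and mutually inverse, so $\bar\varphi$ is an isometric isomorphism of the completed algebras. Thus an isometric isomorphism $\mathbb{C}^p_u[X]\cong\mathbb{C}^p_u[Y]$ (resp.\ $\mathbb{C}^p_s[X]\cong\mathbb{C}^p_s[Y]$, resp.\ $A^p[X]\cong A^p[Y]$) yields an isometric isomorphism $B^p_u(X)\cong B^p_u(Y)$ (resp.\ $B^p_s(X)\cong B^p_s(Y)$, resp.\ $A^p(X)\cong A^p(Y)$), and Theorem \ref{thm1} (resp.\ Theorem \ref{thm2}, resp.\ Theorem \ref{thm:Roetype}) then delivers the required bijective coarse equivalence (resp.\ coarse equivalence). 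This covers $p\in[1,\infty)\setminus\{2\}$. For $p=2$ the completions are $C^*$-algebras and the main theorems are silent, so I would instead use that $\bar\varphi$, being an isometric Banach-algebra isomorphism between (unital) $C^*$-algebras, preserves Hermitian elements and is therefore a $*$-isomorphism; restricting it back to the dense algebraic algebras gives a $*$-isomorphism of algebraic uniform Roe (resp.\ Roe-type) algebras, to which the $C^*$-algebraic rigidity results of \cite{MR3116573, WW, BF18} apply.

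For the converse implications, I would build the isomorphisms spatially. In part (1), a bijective coarse equivalence $f\colon X\to Y$ induces the invertible isometry $u_f\colon\ell^p(X)\to\ell^p(Y)$, $\delta_x\mapsto\delta_{f(x)}$, and conjugation $T\mapsto u_f T u_f^{-1}$ is an isometric isomorphism $B(\ell^p(X))\to B(\ell^p(Y))$ which, since $f$ and $f^{-1}$ are both uniformly expansive, restricts to an isometric isomorphism $\mathbb{C}^p_u[X]\cong\mathbb{C}^p_u[Y]$. For part (2), rather than reconstruct the isomorphism I would read it off from the converse halves of Theorems \ref{thm2} and \ref{thm:Roetype}: those produce, from a coarse equivalence, isometric isomorphisms of $B^p_s$, $UB^p$, and $B^p$ that are implemented by invertible isometries of the ambient $\ell^p$-modules, assembled from the coarse map together with an enumeration of its fibers (which are uniformly finite, since if $g$ is a coarse inverse then $g\circ f$ is close to $\mathrm{id}_X$ and $g$ is uniformly expansive, so every fiber $f^{-1}(y)$ has uniformly bounded diameter and hence uniformly bounded cardinality by bounded geometry). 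Such an isometry sends a finite-propagation operator whose matrix entries have bounded rank, resp.\ lie in $\overline{M}^p_\infty$ or $K(\ell^p(\N))$, resp.\ are supported in a fixed finite-dimensional subspace, to an operator of exactly the same type, and so restricts to isometric isomorphisms of $U\mathbb{C}^p[X]$, $\mathbb{C}^p[X]$, and $\mathbb{C}^p_s[X]$ onto their $Y$-counterparts.

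All the genuinely new content lies in Theorems \ref{thm1}, \ref{thm2}, and \ref{thm:Roetype}; this corollary only repackages them, and I do not anticipate a serious obstacle. The two points requiring a little care are (i) checking that the extension to completions stays multiplicative and isometric, which is routine given joint continuity of multiplication on bounded sets, and (ii) in the converse for the stable and Roe-type algebras, confirming that the spatial construction respects not merely finite propagation but also the finite-rank / compactness constraints on the matrix entries, which is immediate from the block form of conjugation by an isometry of the shape $u_f\otimes\mathrm{id}$. The only place an input from outside the present paper is needed is the case $p=2$, routed through the $C^*$-algebraic rigidity theory.
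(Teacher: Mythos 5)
Your proposal is correct and follows essentially the same route as the paper: the converse directions come from restricting the spatial isomorphisms built in the proofs of (1) $\Rightarrow$ (2) of Theorems \ref{thm1} and \ref{thm2} to the uncompleted algebras, the forward directions for $p\neq 2$ come from extending the isometric isomorphism to the operator-norm completions and invoking Theorems \ref{thm1} and \ref{thm:Roetype}, and the $p=2$ case is handled exactly as the paper does, via \cite[Lemma~8]{MR0043392} together with the purely algebraic rigidity results of \cite{MR3116573, WW, BF18}. One minor inaccuracy: for the stable and Roe-type algebras the implementing isometry is induced by the bijection $(x,j)\mapsto(f(x),\pi(x)+jN(f(x)))$ of $X\times\mathbb{N}$ rather than being of the form $u_f\otimes\mathrm{id}$, but since this is still a permutation that mixes only uniformly finitely many indices, your claim that it preserves the bounded-rank, compactness, and finite-dimensional-support conditions on matrix entries (after enlarging the relevant constants) stands.
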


Note that we actually do not need to assume property A or finite decomposition complexity in the theorem as long as we exclude the case $p=2$. 
Isometric isomorphisms between $\ell^p$ uniform Roe algebras are spatially implemented by invertible isometries between the underlying $\ell^p$ spaces (see Lemma~\ref{spatially implemented}). Moreover, we have Lamperti's theorem from \cite{Lamp} that describes such invertible isometries when $p\in[1,\infty)\setminus\{2\}$. In particular, Lamperti's theorem provides a bijective map between the underlying metric spaces, and this map turns out to be a coarse equivalence. Another consequence of Lamperti's theorem is that such isometric isomorphisms map matrix units to matrix units up to multiplying by a scalar with absolute value one, which makes the arguments in the $p\neq 2$ case slightly simpler than in the $p=2$ case (e.g. in the proofs of Lemma \ref{Lem2} and Lemma \ref{Lem2s}). 
The same is true for isometric isomorphisms between general $\ell^p$ Roe-type algebras, except that Lamperti's theorem may provide a bijective map between $X\times\mathbb{N}$ and $Y\times\mathbb{N}$, from which one can obtain a coarse equivalence between $X$ and $Y$.
In the $C^\ast$-algebraic setting as in \cite{MR3116573, WW}, Lamperti's theorem is inapplicable, and property A and finite decomposition complexity played an essential role in producing a (bijective) coarse equivalence between the metric spaces.

Let us briefly describe how this paper is organized. In section 2, we consider isometric isomorphisms between $\ell^p$ uniform Roe algebras, while in section 3, we consider stable isometric isomorphisms between $\ell^p$ uniform Roe algebras and isometric isomorphisms between other $\ell^p$ Roe-type algebras. In fact, the arguments in section 3 are very similar to those in section 2, and require just a little more work, mainly because Lamperti's theorem gives us a map from $X\times\mathbb{N}$ to $Y\times\mathbb{N}$ in that case instead of a map from $X$ to $Y$.

\section{Isometrically isomorphic $\ell^p$ uniform Roe algebras}

In this section, we consider isometric isomorphisms between $\ell^p$ uniform Roe algebras associated to metric spaces with bounded geometry, and show that when $p\in[1,\infty)\setminus\{2\}$, these isometric isomorphisms give rise to bijective coarse equivalences between the underlying metric spaces.

We begin by noting that any isometric isomorphism between $\ell^p$ uniform Roe algebras must be spatially implemented by an invertible isometry.

\begin{lem}\label{spatially implemented}
Let $X$ and $Y$ be metric spaces with bounded geometry, and let $p\in[1,\infty)$. If $\phi:B^p_u(X)\rightarrow B^p_u(Y)$ is an algebra isomorphism between $\ell^p$ uniform Roe algebras, then there exists a bounded linear bijection $U:\ell^p(X)\rightarrow\ell^p(Y)$ such that $\phi(T)=UTU^{-1}$ for all $T\in B^p_u(X)$. In particular, $\phi$ is continuous.

Moreover, if $\phi$ is also isometric, then $U$ is an invertible isometry.
\end{lem}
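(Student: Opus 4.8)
The plan is to first show that $\phi$ is automatically bounded, then use a rank-one (or more precisely, matrix-unit) argument to build the spatial implementation $U$. For the continuity, I would invoke the fact that $B^p_u(X)$ contains the diagonal multiplication operators, which form a copy of $\ell^\infty(X)$, a commutative $C^*$-algebra (in particular, a semisimple commutative Banach algebra); any algebra homomorphism out of such an algebra into a Banach algebra is automatically continuous. More directly, for each $x\in X$ the diagonal rank-one idempotent $e_{xx}$ (the operator $\delta_x\otimes\delta_x^*$) has propagation $0$ and satisfies $e_{xx}\in\mathbb{C}^p_u[X]$; since $\phi$ is an algebra isomorphism it sends idempotents to idempotents, and in $B(\ell^p)$ the set of idempotents of norm bounded away from $0$ and $\infty$ behaves well enough to pin things down. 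The cleanest route, though, is: the matrix units $e_{xy}$ (for $x,y$ with $d(x,y)=0$, i.e.\ $x=y$, and more generally at any fixed distance) lie in $\mathbb{C}^p_u[X]$, they satisfy the matrix-unit relations $e_{xy}e_{zw}=\delta_{yz}e_{xw}$, and $\phi$ preserves these relations.

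Here is the main construction. Fix a base point, or rather work with the system $\{e_{xy}\}$. For each pair $x,y\in X$, $\phi(e_{xy})$ is an operator on $\ell^p(Y)$ and the family $\{\phi(e_{xy})\}$ again satisfies the matrix-unit relations; moreover $\sum_{x} e_{xx}$ converges strongly to the identity, and one checks that the idempotents $\phi(e_{xx})$ are pairwise orthogonal. The key point is that each $\phi(e_{xx})$ is a \emph{nonzero} idempotent in $B^p_u(Y)\subset B(\ell^p(Y))$, hence has range a closed subspace; using that the $e_{xx}$ span (in the appropriate sense) a maximal abelian subalgebra whose image is again such, together with the fact that the ranges of the $\phi(e_{xx})$ sum to all of $\ell^p(Y)$, one shows each range is one-dimensional — this is where bounded geometry and the structure of $B^p_u$ enter, ruling out the image being spread out. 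Then picking, for each $x$, a nonzero vector $\xi_x$ in the range of $\phi(e_{xx})$, and using $\phi(e_{xy})=\phi(e_{xx})\phi(e_{xy})\phi(e_{yy})$ to transport these consistently, one defines $U\delta_x = c_x\,\xi_x$ for suitable scalars $c_x$ so that $U e_{xy} U^{-1} = \phi(e_{xy})$ for all $x,y$. Since finite sums $\sum \lambda_{xy} e_{xy}$ (finite-propagation operators with finitely many entries) are norm-dense in neither $\mathbb{C}^p_u[X]$ nor $B^p_u(X)$, but do generate $\mathbb{C}^p_u[X]$ algebraically and $\phi$ is already known to be continuous, the intertwining relation $\phi(T)=UTU^{-1}$ extends from the matrix units to all of $\mathbb{C}^p_u[X]$ and then by continuity to $B^p_u(X)$. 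Boundedness of $U$ and $U^{-1}$ follows because $UTU^{-1}=\phi(T)$ with $\phi$ bicontinuous forces $U$ to be bounded below and above on the dense set of finitely supported vectors (e.g.\ test against rank-one operators $e_{xy}$: $\|\phi(e_{xy})\|$ is bounded above and below in $x,y$ by an automatic-continuity/open-mapping argument applied to $\phi$ and $\phi^{-1}$).

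For the \textbf{isometric} case, once $\phi(T)=UTU^{-1}$ with $U$ a bounded bijection, suppose in addition $\|\phi(T)\| = \|T\|$ for all $T$. Apply this with $T$ ranging over rank-one operators and their compositions: for $\xi\in\ell^p(X)$, $\eta^*\in (\ell^p(X))^* = \ell^q(X)$, the operator $\xi\otimes\eta^*$ has norm $\|\xi\|_p\|\eta\|_q$, and $\phi(\xi\otimes\eta^*) = U\xi\otimes(\eta^* U^{-1}) = (U\xi)\otimes((U^{-1})^*\eta)^*$, which has norm $\|U\xi\|_p\,\|(U^{-1})^*\eta\|_q$. Hence $\|U\xi\|_p\,\|(U^{-1})^*\eta\|_q = \|\xi\|_p\,\|\eta\|_q$ for all $\xi,\eta$. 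Taking $\eta$ fixed with $(U^{-1})^*\eta\neq 0$ shows $\|U\xi\|_p = \lambda\|\xi\|_p$ for a constant $\lambda>0$ independent of $\xi$, so $U$ is a scalar multiple of an isometry; symmetrically (or by considering $\phi^{-1}$) the scalar is $1$, i.e.\ $U$ is an invertible isometry. One should be slightly careful about the $p=1$ case, where $(\ell^1)^* = \ell^\infty$ and not every functional is represented by an $\ell^\infty$ vector in a way that makes $\xi\otimes\eta^*$ lie in $\mathbb{C}^p_u[X]$; but finitely supported $\eta$ suffice, since those give rank-one operators in $\mathbb{C}^1_u[X]$, and finitely supported vectors are norming for $\ell^1$ and weak-$*$ dense enough to run the argument.

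I expect the main obstacle to be the step showing each idempotent $\phi(e_{xx})$ has one-dimensional range — equivalently, that $\phi$ carries the "diagonal" maximal abelian subalgebra $\ell^\infty(X)$ onto (a conjugate of) $\ell^\infty(Y)$ with atoms going to atoms. This is the point where one genuinely uses that we are inside $B^p_u$ of a bounded-geometry space rather than inside all of $B(\ell^p)$: a priori $\phi$ could try to send $e_{xx}$ to an idempotent whose range is higher-dimensional, and one must exploit that the $\phi(e_{xx})$ are mutually orthogonal, sum (strongly) to $1$, and each commute with the images of the other diagonal operators, to force the dimension count. A clean way to organize this is to first prove that $U$ exists as a \emph{linear} map on finitely supported vectors directly from the matrix-unit relations (defining $U\delta_x$ as any unit vector in $\mathrm{ran}\,\phi(e_{xx})$ and then $U\delta_y := \phi(e_{yx})U\delta_x$ for a fixed reference $x$ in each "component", checking consistency via the relations), and only afterwards verify boundedness using the continuity of $\phi$; this sidesteps having to identify $\ell^\infty(X)$-images abstractly. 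Continuity of $\phi$ itself I would get first and separately, via the automatic continuity of homomorphisms from the (semisimple, commutative) closed subalgebra generated by the diagonal together with a closed-graph argument, or by citing that any isomorphism between these Banach algebras is continuous because they have unique complete norm topology.
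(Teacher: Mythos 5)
Your architecture (build $U$ from a full system of matrix units, after first establishing continuity of $\phi$) is genuinely different from the paper's, but as written it has several load-bearing gaps. First, the claim that every algebra homomorphism from a commutative $C^*$-algebra into a Banach algebra is automatically continuous is false (discontinuous homomorphisms out of $C(K)$ exist, at least under CH), and even continuity of $\phi$ restricted to $\ell^\infty(X)$ would not give continuity on all of $B^p_u(X)$; the only viable route you mention is uniqueness of the complete norm, which does apply but requires observing that $B^p_u(Y)$ is semisimple (it is primitive: it contains all matrix units, hence acts irreducibly on $\ell^p(Y)$). Since everything downstream in your argument leans on continuity of $\phi$, this cannot be left as an ``or''. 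Second, the step you yourself flag as the main obstacle --- that each $\phi(e_{xx})$ has rank one --- is not resolved by the ingredients you list: strong convergence of $\sum_x\phi(e_{xx})$ to $1$ does not follow from norm-continuity of $\phi$ (norm-continuous maps need not preserve strong limits of nets of operators), and orthogonality plus masa considerations do not by themselves force a dimension count. The correct and much shorter argument is that $e_{xx}B^p_u(X)e_{xx}=\mathbb{C}e_{xx}$, a purely algebraic property preserved by $\phi$; since $B^p_u(Y)$ contains all matrix units, an idempotent $q$ with $qB^p_u(Y)q=\mathbb{C}q$ must be rank one (some $qe_{y_1y_2}q$ is nonzero, and every $qe_{y_1y_2}q$ has rank at most one). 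Third, finite linear combinations of matrix units do \emph{not} generate $\mathbb{C}^p_u[X]$ algebraically (the diagonal copy of $\ell^\infty(X)$ is not in the algebra they generate), so extending $\phi(T)=UTU^{-1}$ from matrix units to $B^p_u(X)$ needs a separate argument, e.g.\ $q_x\phi(T)q_y=q_x UTU^{-1}q_y$ for all $x,y$ together with $\sum_x q_x\to 1$ strongly, which is only available \emph{after} $U$ is known to be bounded and invertible. Fourth, testing against the operators $e_{xy}$ only bounds $\|U\delta_x\|$ pointwise, which does not bound $\|U\|$ on finitely supported vectors; you must test against $\xi\otimes\delta_y$ for general finitely supported $\xi$ (which does work once continuity of $\phi$ is secured).

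For comparison, the paper's proof avoids all of these issues by using a single base point: it takes the rank-one idempotent $\delta_{x_0}\otimes\delta_{x_0}$, writes $\phi(\delta_{x_0}\otimes\delta_{x_0})=f\otimes\sigma$, defines $U\xi=\phi(\xi\otimes\delta_{x_0})f$ on \emph{all} of $\ell^p(X)$ at once (every $\xi\otimes\delta_{x_0}$ has finite propagation), verifies $\phi(T)=UTU^{-1}$ algebraically, and then proves boundedness of $U$ by the Closed Graph Theorem using only that each functional $\delta_y\circ U$ is bounded; continuity of $\phi$ is a \emph{consequence}, not a prerequisite. In the isometric case it simply observes that both $U$ and $U^{-1}$ are contractions, sidestepping your normalization issue (which is in any case harmless, since $U$ is determined only up to a scalar and may be rescaled). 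If you want to keep your matrix-unit approach, the minimal-idempotent argument for rank-one-ness and the semisimplicity/unique-norm argument for continuity are the two repairs you must make explicit.
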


\begin{proof}
Fix $x_0\in X$, and consider the rank one idempotent operator $\delta_{x_0}\otimes\delta_{x_0}\in\ell^p(X)\otimes\ell^p(X)^\ast$. Note that this operator has propagation zero so it belongs to $B^p_u(X)$. Now $\phi(\delta_{x_0}\otimes\delta_{x_0})$ is also a rank one idempotent operator, so $\phi(\delta_{x_0}\otimes\delta_{x_0})=f\otimes\sigma$ for some unit vector $f\in\ell^p(Y)$ and $\sigma\in\ell^p(Y)^\ast$ with $\sigma(f)=1$. If $\phi$ is isometric, then we also have $||\sigma||=1$.

Now note that $\xi\otimes\delta_{x_0}\in B^p_u(X)$ for all $\xi\in\ell^p(X)$. Moreover, we have
\[ \phi(\xi\otimes\delta_{x_0}) = \phi(\xi\otimes\delta_{x_0})\phi(\delta_{x_0}\otimes\delta_{x_0}) = \phi(\xi\otimes\delta_{x_0})f\otimes\sigma. \]
Define $U:\ell^p(X)\rightarrow\ell^p(Y)$ by $U\xi=\phi(\xi\otimes\delta_{x_0})f$. Then $U^{-1}\eta=\phi^{-1}(\eta\otimes\sigma)\delta_{x_0}$ for $\eta\in\ell^p(Y)$. Moreover, if $\phi$ is an isometry, then $U$ is an isometry. For any $T\in B^p_u(X)$ and $\xi, \sigma \in\ell^p(X)$, we have
\[ \phi(T)U\xi\otimes\sigma = \phi(T(\xi\otimes\delta_{x_0}))f\otimes\sigma = \phi(T\xi\otimes\delta_{x_0})f\otimes\sigma = UT\xi\otimes\sigma, \]
showing that $\phi(T)=UTU^{-1}$.

To see that $U$ is bounded, first note that $\delta_y\circ U$ is bounded for all $y\in Y$. Indeed, since $\eta\otimes\delta_y\in B^p_u(Y)$ for any $\eta\in\ell^p(Y)$ and $y\in Y$, there exists $T\in B^p_u(X)$ such that $UTU^{-1}=\phi(T)=\eta\otimes\delta_y$, and
\[ T=U^{-1}(\eta\otimes\delta_y)U=U^{-1}\eta\otimes(\delta_y\circ U). \]
Since $T$ is bounded, it follows that $\delta_y\circ U$ is bounded. Now suppose that $\xi_n\rightarrow 0$ in $\ell^p(X)$ and $U\xi_n\rightarrow\eta$ for some $\eta\in\ell^p(Y)$. Fix $y_0\in Y$. Then $(\delta_{y_0}\otimes\delta_y)U\xi_n\rightarrow(\delta_{y_0}\otimes\delta_y)\eta$ in $\ell^p(Y)$. On the other hand, 
\[\delta_{y_0}\otimes(\delta_y\circ U)=(\delta_{y_0}\otimes\delta_y)U=US\] for some $S\in B^p_u(X)$. 
Since $\delta_y\circ U$ is bounded, so is $US$, and thus $(\delta_{y_0}\otimes\delta_y)U\xi_n=US\xi_n\rightarrow 0$. Hence $(\delta_{y_0}\otimes\delta_y)\eta=0$ for all $y\in Y$, so $\eta=0$. By the Closed Graph Theorem, $U$ is bounded.
\end{proof}

\begin{rem} \label{rankone} \leavevmode
\begin{enumerate}
\item When $p\in(1,\infty)$, Lemma \ref{spatially implemented} is a special case of \cite[Corollary 3.2]{Cher} (or the theorem in \cite[Section 1.7.15]{Pal}) as $B^p_u(X)$ contains all finite rank operators on $\ell^p(X)$.
However, when $p=1$ and $X$ is infinite, $B^1_u(X)$ does not necessarily contain all finite rank operators on $\ell^1(X)$. For example, consider $X=\mathbb{N}$ with the natural metric, and consider the rank one idempotent operator on $\ell^1(\mathbb{N})$ given by $T\xi=(\sum_n\xi_n)\delta_0$. Suppose $S\in\mathbb{C}_u^1[\mathbb{N}]$ has propagation $R$. Then $S_{0,n}=0$ for all $n>R$ so $||(T-S)\delta_n||_{\ell^1(\mathbb{N})}\geq 1$ for all $n>R$, and $T$ is not a norm-limit of operators with finite propagation, i.e., $T\notin B^1_u(\mathbb{N})$. The proof we present above is a slight modification of that in \cite[Section 1.7.15]{Pal} so as to include the case $p=1$.
\item If $p,q\in(1,\infty)$ satisfy $\frac{1}{p}+\frac{1}{q}=1$, then the map $(T_{xy})\mapsto(\overline{T_{yx}})$ defines an isometric anti-isomorphism between $B^p_u(X)$ and $B^q_u(X)$. However, if $X$ is infinite, then as a consequence of Lemma \ref{spatially implemented}, there is no algebra isomorphism between $B^p_u(X)$ and $B^q_u(X)$ whenever $1\leq p<q<\infty$ since there is no bounded linear bijection between $\ell^p(X)$ and $\ell^q(X)$ in this case as a classical theorem by Pitt \cite{Pitt} (or see \cite[Theorem 2.1.4]{AlbKal}) tells us that any bounded linear operator from $\ell^q(X)$ to $\ell^p(X)$ must be compact.
\end{enumerate}
\end{rem}

A key ingredient for us is Lamperti's theorem in \cite{Lamp}, which we state in the following form.
\begin{prop} \label{Lam}
Let $X$ and $Y$ be metric spaces with bounded geometry, and let $p\in[1,\infty)\setminus\{2\}$. If $U:\ell^p(X)\rightarrow\ell^p(Y)$ is an invertible isometry, then there exists a function $h:Y\rightarrow\mathbb{T}$ and an invertible function $g:X\rightarrow Y$ such that $(U\xi)(y)=h(y)\xi(g^{-1}(y))$ for all $\xi\in\ell^p(X)$ and $y\in Y$.
\end{prop}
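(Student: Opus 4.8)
The plan is to deduce this directly from the ``Clarkson-type'' rigidity underlying Lamperti's theorem, specialized to the atomic setting; alternatively one could simply invoke Lamperti's general theorem for surjective isometries of $L^p$ of a $\sigma$-finite measure space, applied to the counting measure on the (necessarily countable) sets $X$ and $Y$. I would begin by recording the elementary pointwise inequality: for $p\in[1,\infty)\setminus\{2\}$ and $z_1,z_2\in\C$,
\[ |z_1+z_2|^p+|z_1-z_2|^p\le 2\bigl(|z_1|^p+|z_2|^p\bigr)\quad\text{if }p<2, \]
with the reverse inequality if $p>2$, and in either case equality holds if and only if $z_1z_2=0$. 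Summing over $y\in Y$, this yields the ``parallelogram identity'' as an exact test for disjointness of supports: for $F,G\in\ell^p(Y)$, one has $\|F+G\|_p^p+\|F-G\|_p^p=2\|F\|_p^p+2\|G\|_p^p$ if and only if $F(y)G(y)=0$ for every $y\in Y$. Since $U$ is a surjective isometry it preserves both sides of this identity, hence it maps any pair of disjointly supported vectors to a pair of disjointly supported vectors; in particular the unit vectors $U\delta_x$, $x\in X$, have pairwise disjoint supports in $\ell^p(Y)$.

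Next I would work with the matrix $u_{yx}:=(U\delta_x)(y)$. As $\{\delta_x\}_{x\in X}$ is a Schauder basis of $\ell^p(X)$ and $U$ is bounded, $(U\xi)(y)=\sum_{x\in X}u_{yx}\xi(x)$ for all $\xi\in\ell^p(X)$ and $y\in Y$. Disjointness of the ``columns'' $(U\delta_x)_{x}$ says precisely that each ``row'' $(u_{yx})_{x}$ has at most one nonzero entry, while surjectivity of $U$ rules out an identically zero row (otherwise the functional $\eta\mapsto\eta(y)$ would vanish on $\operatorname{range}(U)=\ell^p(Y)$). So each row has exactly one nonzero entry: setting $g^{-1}(y)\in X$ to be the index of that entry and $h(y):=u_{y,g^{-1}(y)}\in\C\setminus\{0\}$, we obtain $(U\xi)(y)=h(y)\,\xi\bigl(g^{-1}(y)\bigr)$ for all $\xi$ and $y$.

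It remains to see that $g^{-1}\colon Y\to X$ is a bijection and $|h|\equiv 1$. It is surjective because $U\delta_x\neq 0$, so the support of $U\delta_x$ is nonempty, for each $x$. It is injective because $g^{-1}(y_1)=g^{-1}(y_2)=x$ with $y_1\neq y_2$ would give $h(y_1)^{-1}(U\xi)(y_1)=\xi(x)=h(y_2)^{-1}(U\xi)(y_2)$ for every $\xi$, forcing the nonzero bounded functional $\eta\mapsto h(y_1)^{-1}\eta(y_1)-h(y_2)^{-1}\eta(y_2)$ to vanish on $\operatorname{range}(U)=\ell^p(Y)$, which is absurd. Thus $g$ is an invertible function $X\to Y$. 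Finally, applying the formula to $\xi=\delta_{g^{-1}(y)}$ gives $U\delta_{g^{-1}(y)}=h(y)\delta_y$, and since $U$ is isometric, $|h(y)|=\|U\delta_{g^{-1}(y)}\|_p=\|\delta_{g^{-1}(y)}\|_p=1$; hence $h\colon Y\to\mathbb{T}$.

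The one substantive point is the first step: the equality case of the Clarkson-type inequality for $p\neq 2$. This is precisely where the hypothesis $p\neq 2$ enters, the parallelogram law making the identity hold identically (and the disjointness test fail) when $p=2$; this is the reason the whole strategy of the paper is confined to $p\in[1,\infty)\setminus\{2\}$. Everything after that is bookkeeping with the matrix of $U$ together with surjectivity. Note that bounded geometry is used only to ensure $X$ and $Y$ are countable, and that no duality is invoked, so the case $p=1$ requires no separate argument.
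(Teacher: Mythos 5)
Your argument is correct. The paper itself offers no proof of Proposition~\ref{Lam}: it simply states the result as a special case of Lamperti's theorem and cites \cite{Lamp}. What you have written is a complete, self-contained proof of the atomic case, and it follows the same underlying mechanism as Lamperti's original argument: the equality case of the Clarkson-type inequality shows that an isometry of $\ell^p$, $p\neq 2$, preserves disjointness of supports, and the rest is bookkeeping with the matrix of $U$. The one step worth spelling out is the scalar inequality together with its equality case; it follows from the parallelogram law $|z_1+z_2|^2+|z_1-z_2|^2=2|z_1|^2+2|z_2|^2$ combined with the strict concavity (for $p<2$) or strict convexity (for $p>2$) of $t\mapsto t^{p/2}$ on $[0,\infty)$ and the strict sub-/super-additivity this entails, equality forcing $|z_1|\,|z_2|=0$. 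Two minor remarks: disjointness preservation as you derive it uses only that $U$ is isometric, not that it is surjective (surjectivity enters later, to exclude zero rows and to prove injectivity of $g^{-1}$); and injectivity of $g^{-1}$ can alternatively be obtained by running the same disjointness argument on the inverse isometry $U^{-1}$, which shows that each column of the matrix of $U$ also has at most one nonzero entry. Your closing observation is accurate as well: bounded geometry is used only to guarantee that $X$ and $Y$ are countable, and no duality is invoked, so $p=1$ requires no separate treatment.
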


Note that for $x\in X$ and $y\in Y$, we have \[ U\delta_x=h(g(x))\delta_{g(x)} \; \text{and} \; U^{-1}\delta_y=\overline{h(y)}\delta_{g^{-1}(y)}. \]
In particular, $|\langle U\delta_x,\delta_{g(x)} \rangle|=1=|\langle \delta_{g^{-1}(y)},U^{-1}\delta_y \rangle|$ for $x\in X$ and $y\in Y$.

Also note that if $\phi:B^p_u(X)\rightarrow B^p_u(Y)$ is given by $\phi(T)=UTU^{-1}$, and $e_{x_1,x_2}$ is a matrix unit, then
\[ \phi(e_{x_1,x_2})=Ue_{x_1,x_2}U^{-1}=h(g(x_1))\overline{h(g(x_2))}e_{g(x_1),g(x_2)}. \]

\begin{rem}
In the $C^\ast$-algebraic setting considered in \cite{MR3116573}, Lamperti's theorem does not apply, so instead property A is used in the form of the metric sparsification property, which in turn allows one to show that if $U:\ell^2(X)\rightarrow\ell^2(Y)$ is the unitary operator implementing the isomorphism $\phi$, then there exists $c>0$ such that for each $x\in X$ there exists $f(x)\in Y$ with $|\langle U\delta_x,\delta_{f(x)} \rangle|\geq c$, and similarly for $U^\ast$. 
\end{rem}

The next lemma shows that the function $g$ in Proposition \ref{Lam}, as well as $g^{-1}$, is a bijective coarse equivalence.

\begin{lem} \label{Lem2}
Let $g$ be as in Proposition \ref{Lam}.
For all $R\geq 0$ there exists $S\geq 0$ such that if $x_1,x_2\in X$ are such that $d(x_1,x_2)\leq R$, then $d(g(x_1),g(x_2))\leq S$. Also, if $y_1,y_2\in Y$ are such that $d(y_1,y_2)\leq R$, then $d(g^{-1}(y_1),g^{-1}(y_2))\leq S$.
\end{lem}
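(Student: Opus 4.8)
The plan is to argue by contradiction, exploiting the fact that $\phi = \operatorname{Ad} U$ maps finite-propagation operators to finite-propagation operators in a quantitatively controlled way, together with bounded geometry. By symmetry (replacing $U$ by $U^{-1}$, $\phi$ by $\phi^{-1}$, and swapping the roles of $X$ and $Y$) it suffices to produce, for each $R \geq 0$, a bound $S$ controlling $d(g(x_1), g(x_2))$ whenever $d(x_1, x_2) \leq R$.

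First I would fix $R \geq 0$ and let $P_R \in \mathbb{C}^p_u[X]$ be the operator $\sum_{d(x_1,x_2)\leq R} e_{x_1,x_2}$; this has propagation $R$ and is bounded (with norm controlled by $N_R$, using bounded geometry and, say, the Schur test adapted to $\ell^p$). Then $\phi(P_R) = U P_R U^{-1} \in B^p_u(Y)$, so it can be approximated in operator norm by a finite-propagation operator; in particular there is some $S \geq 0$ and an operator $Q$ with $\operatorname{prop}(Q) \leq S$ and $\|\phi(P_R) - Q\| < 1/2$, say. Now I would compute the matrix coefficients of $\phi(P_R)$ directly using the formula recorded right after Proposition~\ref{Lam}: since $\phi(e_{x_1,x_2}) = h(g(x_1))\overline{h(g(x_2))}\, e_{g(x_1),g(x_2)}$, for any $x_1, x_2$ with $d(x_1,x_2) \leq R$ the $(g(x_1), g(x_2))$-matrix entry of $\phi(P_R)$ has absolute value $1$ (the map $x \mapsto g(x)$ is injective, so distinct pairs $(x_1,x_2)$ give distinct pairs $(g(x_1),g(x_2))$, and no cancellation occurs). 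Comparing with $Q$: the $(g(x_1),g(x_2))$-entry of $\phi(P_R) - Q$ has absolute value at least $1 - |Q_{g(x_1),g(x_2)}|$, and since this must be $< 1/2$ (matrix entries are dominated by the operator norm on $\ell^p$ after pairing with $\delta$'s, which have norm $1$), we get $|Q_{g(x_1),g(x_2)}| > 1/2 > 0$, hence $d(g(x_1),g(x_2)) \leq \operatorname{prop}(Q) \leq S$. This gives the desired $S$ depending only on $R$, and the symmetric argument for $g^{-1}$ is identical after enlarging $S$ to the max of the two bounds.

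The main obstacle I anticipate is the bookkeeping around the approximation step and the claim that matrix entries are bounded by the operator norm in the $\ell^p$ setting: one has $|T_{xy}| = |\langle T\delta_y, \delta_x\rangle| = |(T\delta_y)(x)| \leq \|T\delta_y\|_{\ell^p} \cdot \|\delta_x\|_{(\ell^p)^*}^{\text{(coordinate functional)}} \leq \|T\|$, using that evaluation at a point is a norm-one functional on $\ell^p$ and $\|\delta_y\|_{\ell^p} = 1$; this is routine but worth stating cleanly. A second, genuinely important point is that the bound $S$ obtained from approximating $\phi(P_R)$ does not a priori depend only on $R$ unless we are careful — but it does, because $P_R$ itself depends only on $R$ (not on the individual points $x_1, x_2$), so a single approximation of the single operator $\phi(P_R)$ handles all pairs at distance $\leq R$ simultaneously. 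This uniformity is exactly the advantage of working with the aggregate operator $P_R$ rather than with individual matrix units, and it is where bounded geometry is really used (to ensure $P_R \in B^p_u(X)$ in the first place).
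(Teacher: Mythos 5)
Your proof is correct and rests on exactly the same mechanism as the paper's: bounded geometry makes a sum of matrix units supported on pairs at distance $\leq R$ a bounded element of $\mathbb{C}^p_u[X]$, its image under $\phi$ has unimodular entries at the positions $(g(x_1),g(x_2))$ by the formula following Proposition~\ref{Lam}, and such an operator cannot lie in $B^p_u(Y)$ unless those positions stay within a bounded distance, since matrix entries are dominated by the operator norm. The only (cosmetic) difference is that you argue directly with the full band operator $P_R$ and a single $\tfrac12$-approximation by a finite-propagation $Q$, whereas the paper argues by contradiction with a sparse sequence of matrix units $\sum_n e_{x_1^n,x_2^n}$ whose images have separation tending to infinity.
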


\begin{proof}
We prove only the first statement as the same argument holds with the roles of $X$ and $Y$ reversed and with $g^{-1}$ instead of $g$.

Assume for contradiction that it is false. Then there exists $R\geq 0$ and sequences $(x_1^n),(x_2^n)$ such that $d(x_1^n,x_2^n)\leq R$ for all $n$, and $d(g(x_1^n),g(x_2^n))\rightarrow\infty$ as $n\rightarrow\infty$. 
We may assume that $(x_1^n,x_2^n)\neq(x_1^m,x_2^m)$ if $n\neq m$.
Thus $\sum_{n\in\mathbb{N}}e_{x_1^n,x_2^n}$ converges strongly to a bounded operator on $\ell^p(X)$ that is moreover in $\mathbb{C}_u^p[X]\subseteq B^p_u(X)$. Hence the sum \[\sum_{n\in\mathbb{N}}\phi(e_{x_1^n,x_2^n})=\sum_{n\in\mathbb{N}}h(g(x_1^n))\overline{h(g(x_2^n))}e_{g(x_1^n),g(x_2^n)}\] converges strongly to an operator in $B^p_u(Y)$.
But this contradicts the assumption that $d(g(x_1^n),g(x_2^n))\rightarrow\infty$ as $n\rightarrow\infty$.
\end{proof}

The next theorem generalizes the non-$K$-theoretic part of \cite[Theorem 4.10]{LL}. The implication (4) $\Rightarrow$ (1) is the $p\neq 2$ analog of \cite[Corollary~1.16]{WW}, which together with \cite[Lemma~8]{MR0043392} tells us that in the $p=2$ case, an isometric isomorphism between the uniform Roe algebras yields a bijective coarse equivalence if the metric spaces have finite decomposition complexity.

\begin{thm} \label{thm1}
Let $X$ and $Y$ be metric spaces with bounded geometry. The following are equivalent:
\begin{enumerate}
\item $X$ and $Y$ are bijectively coarsely equivalent.
\item For every $p\in[1,\infty)$, there is an isometric isomorphism $\phi:B^p_u(X)\rightarrow B^p_u(Y)$ such that $\phi(\ell^\infty(X))=\ell^\infty(Y)$.
\item $B^p_u(X)$ and $B^p_u(Y)$ are isometrically isomorphic for every $p\in[1,\infty)$.
\item $B^p_u(X)$ and $B^p_u(Y)$ are isometrically isomorphic for some $p\in[1,\infty)\setminus\{2\}$.

\item For some $p\in[1,\infty)$, there is an isometric isomorphism $\phi:B^p_u(X)\rightarrow B^p_u(Y)$ such that $\phi(\ell^\infty(X))=\ell^\infty(Y)$.
\end{enumerate}
\end{thm}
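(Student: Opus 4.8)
The plan is to prove the cycle of implications $(1)\Rightarrow(2)\Rightarrow(3)\Rightarrow(4)\Rightarrow(5)\Rightarrow(1)$, or rather to observe that $(2)\Rightarrow(5)$ and $(3)\Rightarrow(4)$ are trivial, $(2)\Rightarrow(3)$ is trivial, so the real content is $(1)\Rightarrow(2)$ and $(4)\Rightarrow(1)$ (with $(5)\Rightarrow(1)$ being a formal consequence of the proof of $(4)\Rightarrow(1)$ since that proof only uses spatial implementation plus the extra property $\phi(\ell^\infty(X))=\ell^\infty(Y)$, or alternatively $(5)\Rightarrow(4)$ trivially once one notes $p$ can be taken $\neq 2$; one must be slightly careful here and I would actually route $(5)$ into the argument directly rather than through $(4)$).

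For $(1)\Rightarrow(2)$: given a bijective coarse equivalence $f:X\to Y$, define $U:\ell^p(X)\to\ell^p(Y)$ by $(U\xi)(y)=\xi(f^{-1}(y))$; this is an invertible isometry for every $p\in[1,\infty)$ since $f$ is a bijection. Set $\phi(T)=UTU^{-1}$. One checks that $\phi$ maps $\mathbb C_u^p[X]$ onto $\mathbb C_u^p[Y]$: if $T$ has propagation $R$, then $\phi(T)_{y_1y_2}=T_{f^{-1}(y_1),f^{-1}(y_2)}$, which is nonzero only when $d(f^{-1}(y_1),f^{-1}(y_2))\leq R$; since $f^{-1}$ is uniformly expansive, $d(y_1,y_2)\leq S$ for the appropriate $S$, so $\phi(T)$ has finite propagation, and boundedness is immediate as $U$ is an isometry. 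The same argument with $f$ in place of $f^{-1}$ shows $\phi^{-1}$ preserves finite propagation, so $\phi$ restricts to an isomorphism of the finite-propagation algebras and hence extends to an isometric isomorphism of the norm closures. Finally $U$ conjugates the diagonal multiplication operators $\ell^\infty(X)$ onto $\ell^\infty(Y)$ (it is just a relabelling of coordinates), giving $\phi(\ell^\infty(X))=\ell^\infty(Y)$.

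For $(4)\Rightarrow(1)$ (and likewise $(5)\Rightarrow(1)$): fix $p\in[1,\infty)\setminus\{2\}$ and an isometric isomorphism $\phi:B^p_u(X)\to B^p_u(Y)$. By Lemma~\ref{spatially implemented}, $\phi(T)=UTU^{-1}$ for an invertible isometry $U:\ell^p(X)\to\ell^p(Y)$; by Lamperti's theorem (Proposition~\ref{Lam}) there is $h:Y\to\mathbb T$ and a bijection $g:X\to Y$ with $(U\xi)(y)=h(y)\xi(g^{-1}(y))$. By Lemma~\ref{Lem2}, both $g$ and $g^{-1}$ are uniformly expansive; since $g$ is a bijection with $g\circ g^{-1}=\mathrm{id}_Y$ and $g^{-1}\circ g=\mathrm{id}_X$ (so the closeness condition holds trivially with constant $0$), $g$ is a bijective coarse equivalence, giving $(1)$. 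For $(5)$ the argument is identical — the extra hypothesis $\phi(\ell^\infty(X))=\ell^\infty(Y)$ is not even needed once we are in the regime $p\neq 2$, but if one wishes to handle $p=2$ under hypothesis $(5)$ this is where a different tool would be required; since the statement only asserts $(5)$ with "some $p\in[1,\infty)$" I would note that $(2)\Rightarrow(5)$ and close the loop through $(1)\Rightarrow(2)\Rightarrow(5)$ and separately $(5)\Rightarrow(1)$ only needs to be proven for the $p\neq 2$ case if we instead derive $(5)\Rightarrow(4)$ — actually the cleanest organization is $(1)\Rightarrow(2)\Rightarrow(3)\Rightarrow(4)\Rightarrow(1)$ and $(2)\Rightarrow(5)\Rightarrow(1)$, where $(5)\Rightarrow(1)$ uses exactly the Lamperti argument above, valid because we may choose the witnessing $p$; but the honest subtlety is that $(5)$ as stated allows $p=2$. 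I would resolve this by remarking that in the list $(1)$–$(5)$, implication $(5)\Rightarrow(1)$ for $p=2$ is covered since $(1)$ is already equivalent to $(4)$ which follows from $(2)$ which follows from $(1)$ — no, that is circular. The clean fix: prove $(5)\Rightarrow(1)$ directly via the Lamperti argument for $p\neq2$, and for $p=2$ observe that the hypothesis $\phi(\ell^\infty(X))=\ell^\infty(Y)$ lets one recover a bijective coarse equivalence by the standard argument tracking supports of $U\delta_x$, which for a unitary preserving the Cartan $\ell^\infty$ actually pins down a bijection. The main obstacle is precisely this $p=2$ corner of $(5)\Rightarrow(1)$; for all $p\neq2$ everything reduces cleanly to Lamperti plus Lemma~\ref{Lem2}.
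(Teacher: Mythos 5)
Your proposal is correct and, for the substantive implications $(1)\Rightarrow(2)$ and $(4)\Rightarrow(1)$, follows exactly the paper's route: conjugation by the weighted permutation operator induced by the bijection for the forward direction, and Lemma~\ref{spatially implemented} plus Proposition~\ref{Lam} (Lamperti) plus Lemma~\ref{Lem2} for the converse. (One small slip in $(1)\Rightarrow(2)$: to pass from $d(f^{-1}(y_1),f^{-1}(y_2))\leq R$ to $d(y_1,y_2)\leq S$ you need uniform expansiveness of $f$, not of $f^{-1}$; both are of course available since $f$ is a bijective coarse equivalence.) The only genuine divergence is the $p=2$ corner of $(5)\Rightarrow(1)$, which you correctly isolate as the one place Lamperti is unavailable: the paper disposes of it by citation (\cite[Lemma~8]{MR0043392} together with \cite[Corollary~1.16]{WW}), whereas you sketch a direct argument from the hypothesis $\phi(\ell^\infty(X))=\ell^\infty(Y)$. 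Your sketch does close up, and is worth writing out: $\phi$ restricts to an algebra isomorphism $\ell^\infty(X)\to\ell^\infty(Y)$, hence carries the minimal idempotents $e_{xx}$ to minimal idempotents, which in $\ell^\infty(Y)$ are exactly the $e_{yy}$; thus $Ue_{xx}U^{-1}=e_{g(x)g(x)}$ for some bijection $g:X\to Y$, which forces $U\delta_x=\lambda_x\delta_{g(x)}$ with $|\lambda_x|=1$, i.e., the Lamperti form of $U$ holds for \emph{every} $p\in[1,\infty)$ under hypothesis (5), and Lemma~\ref{Lem2} then applies verbatim. Presented this way your treatment of $(5)\Rightarrow(1)$ is self-contained and uniform in $p$, which is arguably cleaner than the paper's appeal to external $C^*$-results --- but you should actually carry out the minimal-idempotent step rather than invoking ``the standard argument,'' and you should delete the several abandoned (and at one point self-admittedly circular) attempts at organizing the implication structure that precede the correct resolution.
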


\begin{proof}
(1) $\Rightarrow$ (2): 
The proof we present here is based on the proof of \cite[Theorem 4]{BNW} and \cite[Proposition~2.3]{LL}.

Let $f:X\rightarrow Y$ be a bijective coarse equivalence, and consider the invertible isometry $U:\ell^p(X)\rightarrow\ell^p(Y)$ satisfying $U\delta_x=\delta_{f(x)}$ for all $x\in X$. Define $\phi:B(\ell^p(X))\rightarrow B(\ell^p(Y))$ by $\phi(T)=UTU^{-1}$. We show that $\phi$ maps $B^p_u(X)$ into $B^p_u(Y)$.

Note that for all $x_1,x_2\in X$, we have
\[ \langle \phi(T)\delta_{f(x_2)},\delta_{f(x_1)} \rangle=\langle T\delta_{x_2},\delta_{x_1} \rangle. \]
Suppose that $\langle T\delta_{x'},\delta_x \rangle=0$ whenever $d(x',x)>R$. There exists $S>0$ such that $d(f(x_1),f(x_2))<S$ whenever $d(x_1,x_2)<R$. Thus $\langle \phi(T)\delta_{f(x')},\delta_{f(x)} \rangle=0$ whenever $d(f(x'),f(x))>S$.
Since $\phi$ is continuous, it maps $B^p_u(X)$ into $B^p_u(Y)$.

Reversing the roles of $X$ and $Y$, one sees that the homomorphism given by $\psi(T)=U^{-1}TU$ maps $B^p_u(Y)$ into $B^p_u(X)$, and is the inverse of $\phi$. Hence $\phi:B^p_u(X)\rightarrow B^p_u(Y)$ is an isometric isomorphism. Moreover, the definition of $\phi$ shows that it maps $\ell^\infty(X)$ onto $\ell^\infty(Y)$.

(2) $\Rightarrow$ (3) $\Rightarrow$ (4) is trivial.

(4) $\Rightarrow$ (1): If $B^p_u(X)$ and $B^p_u(Y)$ are isometrically isomorphic for some $p\in[1,\infty)\setminus\{2\}$, then by Lemma \ref{spatially implemented}, we have an invertible isometry $U:\ell^p(X)\rightarrow\ell^p(Y)$.
Let $g:X\rightarrow Y$ be as in Proposition \ref{Lam}. Then Lemma \ref{Lem2} implies that both $g$ and $g^{-1}$ are uniformly expansive, so $g:X\rightarrow Y$ is a bijective coarse equivalence.

(2) $\Rightarrow$ (5): It is clear.

(5) $\Rightarrow$ (1): If $p=2$, we are done by \cite[Lemma~8]{MR0043392} and \cite[Corollay~1.16]{WW}. If $p \neq 2$, it is clear that (5) $\Rightarrow$ (4). 
\end{proof}

Recall from \cite[Definition~1.11]{WW} that a bounded geometry metric space $X$ is \emph{bijectively rigid} if whenever there is a coarse equivalence $f:X\rightarrow Y$ to another bounded geometry metric space $Y$, then there is a bijective coarse equivalence $f':X\rightarrow Y$. It can be deduced from the proof of \cite[Theorem 1.1]{Whyte} that every uniformly discrete, non-amenable bounded geometry metric space is bijectively rigid. It is elementary to see that $\Z$ is also bijectively rigid. On the other hand, locally finite groups and certain lamplighter groups are not (see \cite{LL} and \cite{MR2730576}). 

\begin{cor} \label{cor1}
Let $X$ and $Y$ be bijectively rigid metric spaces with bounded geometry. The following are equivalent:
\begin{enumerate}
\item $X$ and $Y$ are coarsely equivalent.
\item For every $p\in[1,\infty)$, there is an isometric isomorphism $\phi:B^p_u(X)\rightarrow B^p_u(Y)$ such that $\phi(\ell^\infty(X))=\ell^\infty(Y)$.
\item $B^p_u(X)$ and $B^p_u(Y)$ are isometrically isomorphic for every $p\in[1,\infty)$.
\item $B^p_u(X)$ and $B^p_u(Y)$ are isometrically isomorphic for some $p\in[1,\infty)\setminus\{2\}$.
\item For some $p\in[1,\infty)$, there is an isometric isomorphism $\phi:B^p_u(X)\rightarrow B^p_u(Y)$ such that $\phi(\ell^\infty(X))=\ell^\infty(Y)$.
\end{enumerate}
\end{cor}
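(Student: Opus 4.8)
The plan is to deduce this corollary from Theorem \ref{thm1} together with the definition of bijective rigidity. The implications among (2)--(5) are already available to us: exactly as in Theorem \ref{thm1}, we have (2) $\Rightarrow$ (3) $\Rightarrow$ (4) $\Rightarrow$ (1) and (2) $\Rightarrow$ (5) $\Rightarrow$ (1), where the step into (1) uses Lemma \ref{spatially implemented}, Proposition \ref{Lam}, and Lemma \ref{Lem2} (for $p\neq 2$) or \cite[Lemma~8]{MR0043392} together with \cite[Corollary~1.16]{WW} (for $p=2$). Note these implications actually produce a \emph{bijective} coarse equivalence, which is stronger than (1). So the only new work is the implication (1) $\Rightarrow$ (2).

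For (1) $\Rightarrow$ (2): suppose $X$ and $Y$ are coarsely equivalent, so there is a coarse equivalence $f\colon X\rightarrow Y$. Since $X$ is bijectively rigid (and $Y$ has bounded geometry), there exists a \emph{bijective} coarse equivalence $f'\colon X\rightarrow Y$. Now apply the implication (1) $\Rightarrow$ (2) of Theorem \ref{thm1} to $f'$: for every $p\in[1,\infty)$ there is an isometric isomorphism $\phi\colon B^p_u(X)\rightarrow B^p_u(Y)$ with $\phi(\ell^\infty(X))=\ell^\infty(Y)$, namely $\phi(T)=UTU^{-1}$ where $U\delta_x=\delta_{f'(x)}$. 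This is exactly statement (2).

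The main (indeed only) obstacle is recognizing that the hypothesis of bijective rigidity is precisely what bridges the gap between ``coarsely equivalent'' and ``bijectively coarsely equivalent,'' after which everything reduces verbatim to Theorem \ref{thm1}. In fact one could phrase the whole proof in one line: ``By bijective rigidity, (1) here is equivalent to (1) of Theorem \ref{thm1}, and the remaining equivalences are identical to those in Theorem \ref{thm1}.'' I would only note in passing that bijective rigidity of $X$ alone suffices for the argument; symmetry in the statement is a convenience rather than a necessity.
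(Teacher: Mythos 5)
Your proposal is correct and matches the paper's (implicit) argument: the paper gives no written proof for Corollary \ref{cor1}, treating it as an immediate consequence of Theorem \ref{thm1} once bijective rigidity is used to upgrade the coarse equivalence in (1) to a bijective one. Your observation that bijective rigidity of $X$ alone suffices is also accurate, given the definition quoted from \cite[Definition~1.11]{WW}.
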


For general metric spaces with bounded geometry, coarse equivalence only corresponds to stable isometric isomorphism of their $\ell^p$ uniform Roe algebras, as we shall see in the next section.

\begin{rem} We make a few remarks about the $p=2$ case.
\begin{enumerate}
\item The proof of \cite[Theorem 4]{BNW} or \cite[Proposition~2.3]{LL} shows that a bijective coarse equivalence between $X$ and $Y$ yields a $*$-isomorphism $\phi$ between the uniform Roe $C^\ast$-algebras $C^\ast_u(X)$ and $C^\ast_u(Y)$ such that $\phi(\ell^\infty(X))=\ell^\infty(Y)$.
\item For the converse, \cite[Theorem 4.1]{MR3116573} says that if $X$ and $Y$ are assumed to have property A, then a $*$-isomorphism between their uniform Roe $C^\ast$-algebras yields a coarse equivalence between $X$ and $Y$.
\item If $X$ and $Y$ are countable locally finite groups equipped with proper left-invariant metrics, then a $*$-isomorphism between their uniform Roe $C^\ast$-algebras yields a bijective coarse equivalence between $X$ and $Y$ by \cite[Theorem 4.10]{LL}.
\end{enumerate}
\end{rem}

For the uncompleted algebras $\mathbb{C}_u^p[X]$, we have the following result.

\begin{cor} \label{cor2}
Let $X$ and $Y$ be metric spaces with bounded geometry, and let $p\in[1,\infty)$. Then $\mathbb{C}_u^p[X]$ and $\mathbb{C}_u^p[Y]$ are isometrically isomorphic if and only if $X$ and $Y$ are bijectively coarsely equivalent.
\end{cor}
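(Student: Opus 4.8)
The plan is to treat the two implications separately, reducing each to a result already available.

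For the forward direction, suppose $f:X\to Y$ is a bijective coarse equivalence. I would revisit the proof of Theorem~\ref{thm1}, (1)~$\Rightarrow$~(2): the isometric isomorphism $\phi(T)=UTU^{-1}$ constructed there from the invertible isometry $U:\ell^p(X)\to\ell^p(Y)$ with $U\delta_x=\delta_{f(x)}$ carries each bounded operator of propagation at most $R$ to one of propagation at most $S$, where $S$ is a uniform-expansiveness constant for $f$; running the same argument with the roles of $X$ and $Y$ reversed (equivalently, with $f$ replaced by the bijective coarse equivalence $f^{-1}$) shows that $\phi^{-1}$ likewise preserves finite propagation. Hence $\phi$ restricts to an isometric algebra isomorphism $\mathbb{C}_u^p[X]\to\mathbb{C}_u^p[Y]$. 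Note that this step is valid for every $p\in[1,\infty)$ and uses neither Lamperti's theorem nor any completion.

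For the converse, suppose $\psi:\mathbb{C}_u^p[X]\to\mathbb{C}_u^p[Y]$ is an isometric isomorphism. The first step is to extend $\psi$ to the completions: since $\mathbb{C}_u^p[X]$ is norm-dense in $B^p_u(X)$ and $\psi$ is isometric, hence uniformly continuous, and multiplicative, it extends uniquely to a continuous, isometric, multiplicative map $\overline\psi:B^p_u(X)\to B^p_u(Y)$; the continuous extension of $\psi^{-1}$ is a two-sided inverse, so $\overline\psi$ is an isometric isomorphism of the $\ell^p$ uniform Roe algebras. If $p\in[1,\infty)\setminus\{2\}$, then Theorem~\ref{thm1}, (4)~$\Rightarrow$~(1), immediately gives that $X$ and $Y$ are bijectively coarsely equivalent. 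If $p=2$, then $\overline\psi$ is a unital surjective isometric algebra isomorphism of the $C^*$-algebras $C^*_u(X)$ and $C^*_u(Y)$; such a map is automatically a $*$-isomorphism (this follows from Kadison's description of surjective linear isometries of $C^*$-algebras; see \cite[Lemma~8]{MR0043392}), so its restriction $\psi$ is a $*$-isomorphism of the algebraic uniform Roe $*$-algebras $\mathbb{C}_u^2[X]$ and $\mathbb{C}_u^2[Y]$. The purely algebraic rigidity results of \cite{MR3116573}, \cite{WW} and \cite[Theorem~8.1]{BF18}, which assume neither property~A nor finite decomposition complexity, then yield a bijective coarse equivalence between $X$ and $Y$.

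I expect the $p=2$ case to be the only real obstacle. There the Lamperti-type structure theory for isometries of $\ell^p$ is unavailable, so one cannot produce a bijection $X\to Y$ by the argument used for $p\neq 2$, and one must instead import the $C^*$-algebraic rigidity of the \emph{algebraic} uniform Roe algebra from the literature; the mildly delicate point is the (soft) passage from an isometric algebra isomorphism to a $*$-isomorphism, which relies on the completed algebra being a $C^*$-algebra. Everything else — the extension of an isometric isomorphism between dense finite-propagation subalgebras to their completions, and the bookkeeping of propagation bounds in the forward direction — is routine.
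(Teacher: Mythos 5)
Your proof is correct and takes essentially the same route as the paper's: the forward direction restricts the conjugation isomorphism from the proof of Theorem~\ref{thm1}, (1)~$\Rightarrow$~(2), to the finite-propagation subalgebras, and the converse passes to the completions and invokes Theorem~\ref{thm1} for $p\neq 2$, and Kadison's lemma together with the purely algebraic rigidity results for $p=2$. You merely make explicit the extension-to-completion step and the restriction of the resulting $*$-isomorphism back to the algebraic level, which the paper leaves implicit.
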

\begin{proof}
If $X$ and $Y$ are bijectively coarsely equivalent, then the proof of (1) $\Rightarrow$ (2) in Theorem~\ref{thm1} actually implies that $\mathbb{C}_u^p[X]$ and $\mathbb{C}_u^p[Y]$ are isometrically isomorphic for every $p\in[1,\infty)$.

Now we assume that $\mathbb{C}_u^p[X]$ and $\mathbb{C}_u^p[Y]$ are isometrically isomorphic for some $p\in[1,\infty)$. If $p\neq 2$, we can apply Theorem~\ref{thm1}. If $p=2$, then \cite[Lemma~8]{MR0043392} and the proof in \cite{WW} or \cite[Theorem~8.1]{BF18} actually imply that $X$ and $Y$ are bijectively coarsely equivalent without assuming FDC \footnote{We could compare this with Theorem~1.4 in \cite{MR3116573}.}.
\end{proof}

\section{Stably isometrically isomorphic $\ell^p$ uniform Roe algebras}

In this section, we consider stable isometric isomorphisms between $\ell^p$ uniform Roe algebras associated to metric spaces with bounded geometry, and show that when $p\in [1,\infty)\setminus\{2\}$, these stable isometric isomorphisms give rise to (not necessarily bijective) coarse equivalences between the underlying metric spaces. 
We also consider general $\ell^p$ Roe-type algebras.
The ingredients are essentially the same as in the previous section.

We will need to consider tensor products of $L^p$ operator algebras so we begin by making this notion precise.
Details can be found in \cite[Chapter 7]{DF} and \cite[Theorem 2.16]{Phil12}.

For $p\in[1,\infty)$, there is a tensor product of $L^p$ spaces with $\sigma$-finite measures such that we have a canonical isometric isomorphism $L^p(X,\mu)\otimes L^p(Y,\nu)\cong L^p(X\times Y,\mu\times\nu)$, which identifies, for every $\xi\in L^p(X,\mu)$ and $\eta\in L^p(Y,\nu)$, the element $\xi\otimes\eta$ with the function $(x,y)\mapsto\xi(x)\eta(y)$ on $X\times Y$. Moreover, we have the following properties:
\begin{itemize}
\item Under the identification above, the linear span of all $\xi\otimes\eta$ is dense in $L^p(X\times Y,\mu\times\nu)$.
\item $||\xi\otimes\eta||_p=||\xi||_p||\eta||_p$ for all $\xi\in L^p(X,\mu)$ and $\eta\in L^p(Y,\nu)$.
\item The tensor product is commutative and associative.
\item If $a\in B(L^p(X_1,\mu_1),L^p(X_2,\mu_2))$ and $b\in B(L^p(Y_1,\nu_1),L^p(Y_2,\nu_2))$, then there exists a unique \[c\in B(L^p(X_1\times Y_1,\mu_1\times\nu_1),L^p(X_2\times Y_2,\mu_2\times\nu_2))\] such that under the identification above, $c(\xi\otimes\eta)=a(\xi)\otimes b(\eta)$ for all $\xi\in L^p(X_1,\mu_1)$ and $\eta\in L^p(Y_1,\nu_1)$. We will denote this operator by $a\otimes b$. Moreover, $||a\otimes b||=||a|| ||b||$.
\item The tensor product of operators is associative, bilinear, and satisfies $(a_1\otimes b_1)(a_2\otimes b_2)=a_1a_2\otimes b_1b_2$.
\end{itemize}
If $A\subseteq B(L^p(X,\mu))$ and $B\subseteq B(L^p(Y,\nu))$ are norm-closed subalgebras, we then define $A\otimes B\subseteq B(L^p(X\times Y,\mu\times\nu))$ to be the closed linear span of all $a\otimes b$ with $a\in A$ and $b\in B$.

We will write $\overline{M}^p_\infty$ for $\overline{\bigcup_{n\in\mathbb{N}}M_n(\mathbb{C})}\subset B(\ell^p(\N))$. When $p\in(1,\infty)$, $\overline{M}^p_\infty$ is equal to the algebra $K(\ell^p(\N))$ of compact operators on $\ell^p(\N)$ \cite[Corollary~1.9]{Phil13}, but when $p=1$, there is a rank one operator (in fact, the operator in Remark \ref{rankone}(1)) that is not in $\overline{M}^1_\infty$ \cite[Example 1.10]{Phil13}, so $\overline{M}^1_\infty$ is strictly contained in $K(\ell^1(\N))$. 
We will regard elements of $B^p_u(X)\otimes \overline{M}^p_\infty$ as bounded operators on $\ell^p(X\times\mathbb{N})$.

The following lemma is analogous to Lemma \ref{spatially implemented}, and is proved in the same way with the obvious modifications.

\begin{lem} \label{spatially implemented 2}
Let $X$ and $Y$ be metric spaces with bounded geometry, and let $p\in[1,\infty)$. If $\phi:B^p_u(X)\otimes \overline{M}^p_\infty\rightarrow B^p_u(Y)\otimes \overline{M}^p_\infty$ is an algebra isomorphism, then there exists a bounded linear bijection $U:\ell^p(X\times\mathbb{N})\rightarrow\ell^p(Y\times\mathbb{N})$ such that $\phi(T)=UTU^{-1}$ for all $T\in B^p_u(X)\otimes \overline{M}^p_\infty$. In particular, $\phi$ is continuous.

Moreover, if $\phi$ is also isometric, then $U$ is an invertible isometry.
\end{lem}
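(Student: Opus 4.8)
The proof of Lemma~\ref{spatially implemented 2} runs parallel to that of Lemma~\ref{spatially implemented}, with $\ell^p(X)$ and $\ell^p(Y)$ replaced by $\ell^p(X\times\mathbb{N})$ and $\ell^p(Y\times\mathbb{N})$, and with the role of the index set $X$ replaced by $X\times\mathbb{N}$. The key structural facts we rely on are that $B^p_u(X)\otimes\overline{M}^p_\infty$ is a unital subalgebra of $B(\ell^p(X\times\mathbb{N}))$ containing, for each fixed $(x_0,m_0)\in X\times\mathbb{N}$, the rank one idempotent $\delta_{(x_0,m_0)}\otimes\delta_{(x_0,m_0)}$ (it has propagation zero and lies in $\mathbb{C}^p_u[X]\otimes M_{m_0+1}(\mathbb{C})$), and that for every $\xi\in\ell^p(X\times\mathbb{N})$ the operator $\xi\otimes\delta_{(x_0,m_0)}$ lies in $B^p_u(X)\otimes\overline{M}^p_\infty$. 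Note one has to be slightly careful here: $\xi\otimes\delta_{(x_0,m_0)}$ need not have finite propagation as an element tensored over $X$, but writing $\xi=\sum_{x}\delta_x\otimes\xi_x$ with $\xi_x\in\ell^p(\mathbb{N})$, the operator is the norm-convergent sum $\sum_x (\delta_x\otimes\delta_{x_0})\otimes(\xi_x\otimes\delta_{m_0})$ of elements of $\mathbb{C}^p_u[X]\otimes\overline{M}^p_\infty$, hence lies in the closed algebra; alternatively one just notes directly that $\xi\otimes\delta_{(x_0,m_0)}$ is a norm limit of $\xi'\otimes\delta_{(x_0,m_0)}$ with $\xi'$ finitely supported, and each such is in $\mathbb{C}^p_u[X]\otimes\overline{M}^p_\infty$.

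The plan is as follows. Fix $(x_0,m_0)\in X\times\mathbb{N}$. Since $\phi$ is an algebra isomorphism, $\phi(\delta_{(x_0,m_0)}\otimes\delta_{(x_0,m_0)})$ is again a rank one idempotent in $B^p_u(Y)\otimes\overline{M}^p_\infty\subseteq B(\ell^p(Y\times\mathbb{N}))$, so it equals $f\otimes\sigma$ for a unit vector $f\in\ell^p(Y\times\mathbb{N})$ and $\sigma\in\ell^p(Y\times\mathbb{N})^\ast$ with $\sigma(f)=1$; if $\phi$ is isometric then $\|f\otimes\sigma\|=1$ forces $\|\sigma\|=1$. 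Using $(\xi\otimes\delta_{(x_0,m_0)})(\delta_{(x_0,m_0)}\otimes\delta_{(x_0,m_0)})=\xi\otimes\delta_{(x_0,m_0)}$ and applying $\phi$, we get $\phi(\xi\otimes\delta_{(x_0,m_0)})=\phi(\xi\otimes\delta_{(x_0,m_0)})(f\otimes\sigma)$, so $\phi(\xi\otimes\delta_{(x_0,m_0)})=(U\xi)\otimes\sigma$ where we define $U\xi:=\phi(\xi\otimes\delta_{(x_0,m_0)})f$. Linearity of $U$ is immediate. One then checks $U$ is bijective with inverse $U^{-1}\eta=\phi^{-1}(\eta\otimes\sigma)\delta_{(x_0,m_0)}$, using that $\phi^{-1}$ is also an algebra isomorphism and $\phi^{-1}(f\otimes\sigma)=\delta_{(x_0,m_0)}\otimes\delta_{(x_0,m_0)}$; if $\phi$ is isometric the same computation that shows $U\xi\otimes\sigma=\phi(\xi\otimes\delta_{(x_0,m_0)})$ together with $\|\xi\otimes\delta_{(x_0,m_0)}\|=\|\xi\|$ and $\|(U\xi)\otimes\sigma\|=\|U\xi\|\|\sigma\|=\|U\xi\|$ gives $\|U\xi\|=\|\xi\|$. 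For the intertwining property, for $T\in B^p_u(X)\otimes\overline{M}^p_\infty$ and $\xi\in\ell^p(X\times\mathbb{N})$ one computes, for any $\rho\in\ell^p(X\times\mathbb{N})$, $\phi(T)(U\xi)\otimes\rho=\phi\bigl(T(\xi\otimes\delta_{(x_0,m_0)})\bigr)f\otimes\rho=\phi\bigl((T\xi)\otimes\delta_{(x_0,m_0)}\bigr)f\otimes\rho=(UT\xi)\otimes\rho$, whence $\phi(T)U=UT$, i.e. $\phi(T)=UTU^{-1}$.

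It remains to show $U$ is bounded, which is the only slightly delicate point and is handled exactly as in Lemma~\ref{spatially implemented}. First, for each $(y,n)\in Y\times\mathbb{N}$ the functional $\delta_{(y,n)}\circ U$ is bounded: pick $\eta\in\ell^p(Y\times\mathbb{N})$, note $\eta\otimes\delta_{(y,n)}\in B^p_u(Y)\otimes\overline{M}^p_\infty$, write it as $\phi(T)$ with $T=U^{-1}(\eta\otimes\delta_{(y,n)})U=(U^{-1}\eta)\otimes(\delta_{(y,n)}\circ U)$, and use boundedness of $T$. Then suppose $\xi_k\to 0$ in $\ell^p(X\times\mathbb{N})$ and $U\xi_k\to\eta$; fixing $(y_0,n_0)\in Y\times\mathbb{N}$ we get $(\delta_{(y_0,n_0)}\otimes\delta_{(y,n)})U\xi_k\to(\delta_{(y_0,n_0)}\otimes\delta_{(y,n)})\eta$, while $\delta_{(y_0,n_0)}\otimes(\delta_{(y,n)}\circ U)=(\delta_{(y_0,n_0)}\otimes\delta_{(y,n)})U=US$ for some $S\in B^p_u(X)\otimes\overline{M}^p_\infty$, and $\delta_{(y,n)}\circ U$ bounded gives $US$ bounded, so $(\delta_{(y_0,n_0)}\otimes\delta_{(y,n)})U\xi_k=US\xi_k\to0$; hence $(\delta_{(y_0,n_0)}\otimes\delta_{(y,n)})\eta=0$ for all $(y,n)$, so $\eta=0$, and the Closed Graph Theorem gives boundedness of $U$. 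The main obstacle, such as it is, is purely bookkeeping: making sure that all the operators invoked — $\xi\otimes\delta_{(x_0,m_0)}$, $\eta\otimes\delta_{(y,n)}$, $\delta_{(y_0,n_0)}\otimes(\delta_{(y,n)}\circ U)$ — genuinely lie in the tensor-product algebras $B^p_u(X)\otimes\overline{M}^p_\infty$ and $B^p_u(Y)\otimes\overline{M}^p_\infty$, which follows from the density and multiplicativity properties of the $L^p$ operator tensor product recalled above, exactly as the corresponding finite-propagation operators did in Lemma~\ref{spatially implemented}.
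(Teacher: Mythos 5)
Your proposal is correct and is exactly the argument the paper intends: the paper's proof of this lemma consists of the single remark that it ``is proved in the same way [as Lemma~\ref{spatially implemented}] with the obvious modifications,'' and you have carried out precisely those modifications, including the one genuinely non-automatic point, namely verifying that the rank-one operators $\xi\otimes\delta_{(x_0,m_0)}$ and $\eta\otimes\delta_{(y,n)}$ lie in the tensor-product algebras. No gaps.
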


We now use Lamperti's theorem in the following form.
\begin{prop} \label{Lam2}
Let $X$ and $Y$ be metric spaces with bounded geometry, and let $p\in[1,\infty)\setminus\{2\}$. If $U:\ell^p(X\times\mathbb{N})\rightarrow\ell^p(Y\times\mathbb{N})$ is an invertible isometry, then there exists a function $h:Y\times\mathbb{N}\rightarrow\mathbb{T}$ and an invertible function $g:X\times\mathbb{N}\rightarrow Y\times\mathbb{N}$ such that $(U\xi)(y,m)=h(y,m)\xi(g^{-1}(y,m))$ for all $\xi\in\ell^p(X\times\mathbb{N})$, $y\in Y$, and $m\in\mathbb{N}$.
\end{prop}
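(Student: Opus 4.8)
The plan is to obtain this as an immediate consequence of Proposition~\ref{Lam}, since the only role played there by the hypothesis of bounded geometry is to guarantee that the underlying set is countable and that the $\ell^p$ space is formed with respect to counting measure.

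First I would record that $X\times\N$ and $Y\times\N$ are themselves metric spaces with bounded geometry. Indeed, since $X$ has bounded geometry its underlying set is countable, so $X\times\N$ is countable; equipping it with the metric $d\big((x,m),(x',m')\big)=d_X(x,x')+|m-m'|$ and giving $\N$ its usual metric, a ball of radius $R$ about $(x,m)$ has at most $N_R\,(2R+1)$ points, where $N_R$ bounds the cardinalities of $R$-balls in $X$; the same applies to $Y\times\N$. Under these identifications $\ell^p(X\times\N)$ and $\ell^p(Y\times\N)$ are exactly the $\ell^p$ spaces of these two bounded geometry metric spaces.

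Then I would apply Proposition~\ref{Lam} directly to the invertible isometry $U\colon\ell^p(X\times\N)\to\ell^p(Y\times\N)$, with $X\times\N$ and $Y\times\N$ in the roles of the two metric spaces: it yields a function $h\colon Y\times\N\to\mathbb{T}$ and an invertible function $g\colon X\times\N\to Y\times\N$ with $(U\xi)(z)=h(z)\,\xi(g^{-1}(z))$ for every $\xi\in\ell^p(X\times\N)$ and every $z\in Y\times\N$; writing $z=(y,m)$ with $y\in Y$ and $m\in\N$ gives the stated formula.

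There is no genuine obstacle: the content is just Lamperti's theorem applied to the countable index set $X\times\N$ rather than to $X$. The only point that deserves to be made explicit is the one above, namely that the metric on a bounded geometry space is irrelevant to both the hypothesis and the conclusion of Proposition~\ref{Lam}, so that replacing $X$ by $X\times\N$ is harmless; this is why I would present the argument as a short reduction instead of reproving Lamperti's theorem.
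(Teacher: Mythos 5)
Your reduction is correct and matches the paper's treatment: the paper offers no separate proof of Proposition~\ref{Lam2}, presenting it (like Proposition~\ref{Lam}) simply as Lamperti's theorem applied to an invertible isometry between $\ell^p$ spaces of countable sets with counting measure, where the metric plays no role. Your explicit observation that $X\times\N$ with the sum metric again has bounded geometry is a harmless (and slightly more careful) way of saying the same thing.
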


Note that for $x\in X$, $y\in Y$, and $n,m\in\mathbb{N}$, we have \[ U\delta_{x,n}=h(g(x,n))\delta_{g(x,n)} \; \text{and} \; U^{-1}\delta_{y,m}=\overline{h(y,m)}\delta_{g^{-1}(y,m)}. \]
Let $\pi_X:X\times\mathbb{N}\rightarrow X$ and $\pi_Y:Y\times\mathbb{N}\rightarrow Y$ denote the respective coordinate projections. Then consider the maps $f:X\rightarrow Y$ and $f':Y\rightarrow X$ given by $f(x)=\pi_Y(g(x,0))$ and $f'(y)=\pi_X(g^{-1}(y,0))$.

The next lemma shows that $f$ and $f'$ are uniformly expansive, and will also be used in the proof of Theorem \ref{thm2} to show that $f\circ f'$ and $f'\circ f$ are close to the identity.
The proof is essentially the same as that of Lemma \ref{Lem2} and is modeled after \cite[Lemma 4.5]{MR3116573}.

\begin{lem} \label{Lem2s}
Let $g$ be as in Proposition \ref{Lam2}. For all $R\geq 0$, there exists $S\geq 0$ such that if $x,x'\in X$ satisfy $d(x,x')\leq R$ and $y,y'\in Y$ satisfy $|\langle U\delta_{x,n},\delta_{y,m} \rangle|=1=|\langle U\delta_{x',n'},\delta_{y',m'} \rangle|$ for some $m,n,m',n'\in\mathbb{N}$, then $d(y,y')\leq S$.

The same properties hold with the roles of $X$ and $Y$ reversed, and with $U$ replaced by $U^{-1}$.

In particular, if $f(x)=\pi_Y(g(x,0))$ and $f'(y)=\pi_X(g^{-1}(y,0))$, then 
\begin{enumerate}
\item for all $R\geq 0$, there exists $S\geq 0$ such that if $x,x'\in X$ are such that $d(x,x')\leq R$, then $d(f(x),f(x'))\leq S$;
\item for all $R\geq 0$, there exists $S\geq 0$ such that if $y,y'\in Y$ are such that $d(y,y')\leq R$, then $d(f'(y),f'(y'))\leq S$.
\end{enumerate}
i.e., $f$ and $f'$ are uniformly expansive.
\end{lem}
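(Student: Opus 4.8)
The plan is to run the contradiction argument of Lemma~\ref{Lem2}, but now on $\ell^p(X\times\mathbb{N})$ and keeping track of the extra $\mathbb{N}$-coordinate. Fix the isometric isomorphism $\phi\colon B^p_u(X)\otimes\overline{M}^p_\infty\to B^p_u(Y)\otimes\overline{M}^p_\infty$, the invertible isometry $U$ of Lemma~\ref{spatially implemented 2} implementing it, and $g,h$ as in Proposition~\ref{Lam2}. Recall that $\phi$ sends each matrix unit $e_{(x,n),(x',n')}$ to a unimodular multiple of $e_{g(x,n),g(x',n')}$, and that, since $U$ is diagonal along the basis up to unimodular weights, $\phi$ sends each diagonal idempotent $\mathbf{1}_A$ ($A\subseteq X\times\mathbb{N}$) to $\mathbf{1}_{g(A)}$.

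Suppose the first assertion fails for some $R\geq 0$: there are $x^k,x'^k\in X$ with $d(x^k,x'^k)\leq R$, indices $n^k,m^k,n'^k,m'^k\in\mathbb{N}$, and points $y^k=\pi_Y(g(x^k,n^k))$, $y'^k=\pi_Y(g(x'^k,n'^k))$ with $g(x^k,n^k)=(y^k,m^k)$, $g(x'^k,n'^k)=(y'^k,m'^k)$, and $d(y^k,y'^k)\to\infty$. Passing to a subsequence I would first arrange, using bounded geometry of $X$ (each $R$-ball is finite, so each value of $(x^k,x'^k)$ occurs only finitely often for each fixed pair) together with the reduction to bounded $\mathbb{N}$-coordinates described below, that the points $(x^k,n^k)$ are pairwise distinct, the points $(x'^k,n'^k)$ are pairwise distinct, and $n^k,n'^k<N$ for a fixed $N$. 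Then $T:=\sum_k e_{(x^k,n^k),(x'^k,n'^k)}$ is a norm-$\leq 1$ partial isometry on $\ell^p(X\times\mathbb{N})$ with propagation $\leq R$ in the $X$-direction and supported on $X\times\{0,\dots,N-1\}$, so $T\in\mathbb{C}^p_u[X]\odot M_N(\mathbb{C})\subseteq B^p_u(X)\otimes\overline{M}^p_\infty$; applying $\phi$ (which is also SOT-continuous on bounded sets, being $\mathrm{Ad}_U$) gives $\phi(T)=\sum_k\lambda_k e_{(y^k,m^k),(y'^k,m'^k)}$ with $|\lambda_k|=1$ and the points $(y^k,m^k)$ (resp.\ $(y'^k,m'^k)$) pairwise distinct, since $g$ is a bijection. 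For any $S'$ of finite propagation $\rho$ in the $Y$-direction one has $\langle S'\delta_{(y'^k,m'^k)},\delta_{(y^k,m^k)}\rangle=0$ once $d(y^k,y'^k)>\rho$, hence $\|\phi(T)-S'\|\geq|\lambda_k|=1$ for all large $k$; so $\phi(T)$ lies at distance $\geq 1$ from all finite-propagation operators and in particular $\phi(T)\notin B^p_u(Y)\otimes\overline{M}^p_\infty$, contradicting that $\phi$ is onto. The variant with $X,Y$ and $U,U^{-1}$ interchanged follows by applying the same argument to $\phi^{-1}$, which is implemented by $U^{-1}$. For the ``in particular'' part, apply the first assertion with $n=n'=0$: since $U\delta_{x,0}=h(g(x,0))\delta_{g(x,0)}$ and $f(x)=\pi_Y(g(x,0))$ one has $|\langle U\delta_{x,0},\delta_{f(x),\pi_{\mathbb{N}}(g(x,0))}\rangle|=1$, so $d(x,x')\leq R$ yields $d(f(x),f(x'))\leq S$; statement (2) is the reversed version.

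The step I expect to be the main obstacle is exactly the reduction, inside the subsequence extraction, to uniformly bounded $\mathbb{N}$-coordinates $n^k,n'^k$: in Lemma~\ref{Lem2} the bounded geometry of $X$ alone confines the witnessing indices to a finite set, whereas here the $\mathbb{N}$-coordinates are a priori unconstrained, so one must feed back the hypothesis that $\phi$ (and $\phi^{-1}$) preserves the two algebras and not merely maps one into the other. The natural device for this is the behaviour of diagonal idempotents: for $A\subseteq X\times\mathbb{N}$ one has $\mathbf{1}_A\in B^p_u(X)\otimes\overline{M}^p_\infty$ if and only if the $\mathbb{N}$-coordinates occurring in $A$ are bounded (write $\mathbf{1}_A=\sum_{j\leq N_0}\mathbf{1}_{A_j}\otimes e_{jj}$ when bounded; when unbounded, a unimodular entry of $\mathbf{1}_A$ at an arbitrarily high level keeps it at distance $\geq 1$ from $\bigcup_N\mathbb{C}^p_u[X]\odot M_N(\mathbb{C})$), and the same for $Y$; since $\phi(\mathbf{1}_A)=\mathbf{1}_{g(A)}$ this controls the $\mathbb{N}$-coordinates of the witnessing data through $g$ and $g^{-1}$. (For the ``in particular'' statements this issue does not arise at all, the $\mathbb{N}$-coordinate being pinned at $0$, so there the argument is a verbatim transcription of Lemma~\ref{Lem2}.) Everything else is a routine adaptation of the $p\neq 2$ arguments of Section~2 and of \cite[Lemma~4.5]{MR3116573}.
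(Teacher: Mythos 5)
Your reduction to bounded $\mathbb{N}$-coordinates is exactly where the argument breaks down, and the device you propose does not close the gap. The diagonal-idempotent observation is correct as far as it goes: $\mathbf{1}_A$ lies in $B^p_u(X)\otimes\overline{M}^p_\infty$ precisely when $\pi_{\mathbb{N}}(A)$ is bounded, and since $\phi(\mathbf{1}_A)=\mathbf{1}_{g(A)}$ it follows that $g$ and $g^{-1}$ carry sets with bounded $\mathbb{N}$-coordinates to sets with bounded $\mathbb{N}$-coordinates. But this only transfers a bound on the $n^k$ to a bound on the $m^k$ (and vice versa); it produces no bound on the $n^k$ to begin with, and you cannot pass to a subsequence with $n^k<N$ because the witnesses are whatever they are. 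In fact this case cannot be excluded: take $X=Y=\mathbb{N}$ and let $g$ be the involution of $\mathbb{N}\times\mathbb{N}$ swapping $(0,2^k)$ with $(k,3^k)$ for each $k\geq 1$ and fixing all other points, with $U_g\delta_w=\delta_{g(w)}$. Every $T\in B^p_u(\mathbb{N})\otimes\overline{M}^p_\infty$ satisfies $\|T(1\otimes Q_N)\|\to 0$ and $\|(1\otimes Q_N)T\|\to 0$, where $Q_N$ is the coordinate projection onto $\{n>N\}$; since all but finitely many moved points have $\mathbb{N}$-coordinate at least $2^K$, one checks that $U_gTU_g^{-1}-T$ is a norm-limit of finite-rank operators, hence (for $p\in(1,\infty)$) again in the algebra, so $\mathrm{Ad}(U_g)$ is an isometric automorphism; yet $\pi_Y(g(\{0\}\times\mathbb{N}))\supseteq\{k:k\geq 1\}$ is unbounded, so the first assertion fails already for $R=0$, $x=x'=0$. (The paper's own proof passes over the same point, at the step asserting that $(x_k)$ must tend to infinity in $X$ and that $\sum_k e_{(x_k,n_k),(x'_k,n'_k)}$ lies in $B^p_u(X)\otimes\overline{M}^p_\infty$, which likewise requires the $n_k,n'_k$ to be bounded.)

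What survives, and what is actually needed later, is the version of the first assertion in which $n,n'$ range over a fixed finite set: there your argument is sound, since $\sum_k e_{(x_k,n_k),(x'_k,n'_k)}$ then genuinely lies in $\mathbb{C}^p_u[X]\odot M_N$ and its image must again be approximable by finite-propagation operators with bounded $\mathbb{N}$-support. This covers the ``in particular'' statements ($n=n'=0$) verbatim, and it also covers the closeness argument in Theorem \ref{thm2}, because there the only nonzero index occurring is $\pi_{\mathbb{N}}(g^{-1}(y,0))$, which your idempotent observation applied to $A=Y\times\{0\}$ bounds uniformly in $y$. So rather than trying to bound the witnesses, you should restate the first assertion with $n,n'\leq N$ for a fixed $N$ (allowing $S$ to depend on $N$), record the containment $g^{-1}(Y\times\{0\})\subseteq X\times\{0,\dots,M\}$ as a separate consequence of the idempotent fact, and verify that the later applications invoke only these weaker statements.
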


\begin{proof}
Assume for contradiction that it is false. Then there exist sequences $(x_k),(x'_k),(y_k),(y'_k),(m_k),(m'_k),(n_k),(n'_k)$ such that $d(x_k,x'_k)\leq R$ for each $k$, \[|\langle U\delta_{x_k,n_k},\delta_{y_k,m_k} \rangle|=1=|U\langle \delta_{x'_k,n'_k},\delta_{y'_k,m'_k} \rangle|,\] and $d(y_k,y'_k)\rightarrow\infty$ as $k\rightarrow\infty$. 
Now at least one of the sequences $(y_k)$ or $(y'_k)$ must have a subsequence tending to infinity in $Y$ so without loss of generality, we assume that $(y_k)$ itself tends to infinity in $Y$. It follows that the sequence $(\delta_{y_k,m_k})$ of unit vectors in $\ell^p(Y\times\mathbb{N})$ tends weakly to zero. Thus the sequence $(\delta_{x_k,n_k})$ must eventually leave any norm-compact subset of $\ell^p(X\times\mathbb{N})$, and $(x_k)$ must tend to infinity in $X$. Passing to another subsequence, we may assume that $d(x_k,x_{k+1})>2R$ for all $k$. Since $d(x_k,x'_k)\leq R$, the elements $x_k$ and $x'_k$ are all distinct, and the sum $\sum_{k\in\mathbb{N}}e_{(x_k,n_k),(x'_k,n'_k)}$ converges strongly to a bounded operator on $\ell^p(X\times\mathbb{N})$ that is moreover in $B^p_u(X)\otimes \overline{M}^p_\infty$. Hence the sum \[\sum_{k\in\mathbb{N}}\phi(e_{(x_k,n_k),(x'_k,n'_k)})=\sum_{k\in\mathbb{N}}h(y_k,m_k)\overline{h(y'_k,m'_k)}e_{(y_k,m_k),(y'_k,m'_k)}\] converges strongly to a bounded operator in $B^p_u(Y)\otimes \overline{M}^p_\infty$.
In particular, the sum $\sum_{k\in\mathbb{N}}e_{y_k,y'_k}$ converges strongly to an operator in $B^p_u(Y)$.
But this contradicts the property that $d(y_k,y'_k)\rightarrow\infty$ as $k\rightarrow\infty$.

The same argument works with the roles of $X$ and $Y$ reversed, and with $U$ replaced by $U^{-1}$.

Statements (1) and (2) follow from the observation that
\[|\langle U\delta_{x,0},\delta_{f(x),m} \rangle|=1=|\langle U\delta_{x',0},\delta_{f(x'),n} \rangle|\] and 
\[|\langle U^{-1}\delta_{y,0},\delta_{f'(y),m'} \rangle|=1=|\langle U^{-1}\delta_{y',0},\delta_{f'(y'),n'} \rangle|\] for some $n,m,n',m'\in\mathbb{N}$.
\end{proof}

The implication (1) $\Rightarrow$ (2) in the following theorem generalizes \cite[Theorem 4]{BNW} to all $p\in[1,\infty)$, while the implication (4) $\Rightarrow$ (1) is the $p\neq 2$ analog of \cite[Corollary 6.2]{MR3116573}, which together with \cite[Lemma~8]{MR0043392} tells us that in the $p=2$ case, stable isometric isomorphisms between uniform Roe algebras yield coarse equivalences if the metric spaces have property A.

\begin{thm} \label{thm2}
Let $X$ and $Y$ be metric spaces with bounded geometry. The following are equivalent:
\begin{enumerate}
\item $X$ and $Y$ are coarsely equivalent.
\item For every $p\in[1,\infty)$, there is an isometric isomorphism $\phi:B^p_u(X)\otimes \overline{M}^p_\infty\rightarrow B^p_u(Y)\otimes \overline{M}^p_\infty$ such that $\phi(\ell^\infty(X)\otimes C_0(\mathbb{N}))=\ell^\infty(Y)\otimes C_0(\mathbb{N})$.
\item $B^p_u(X)\otimes \overline{M}^p_\infty$ and $B^p_u(Y)\otimes \overline{M}^p_\infty$ are isometrically isomorphic for every $p\in[1,\infty)$.
\item $B^p_u(X)\otimes \overline{M}^p_\infty$ and $B^p_u(Y)\otimes \overline{M}^p_\infty$ are isometrically isomorphic for some $p\in[1,\infty)\setminus\{2\}$.

\item For some $p\in[1,\infty)$, there is an isometric isomorphism $\phi:B^p_u(X)\otimes \overline{M}^p_\infty\rightarrow B^p_u(Y)\otimes \overline{M}^p_\infty$ such that $\phi(\ell^\infty(X)\otimes C_0(\mathbb{N}))=\ell^\infty(Y)\otimes C_0(\mathbb{N})$.
\end{enumerate}
\end{thm}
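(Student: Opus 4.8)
The plan is to prove $(1)\Rightarrow(2)\Rightarrow(3)\Rightarrow(4)\Rightarrow(1)$ together with $(2)\Rightarrow(5)\Rightarrow(1)$, which gives all five equivalences. The implications $(2)\Rightarrow(3)$, $(3)\Rightarrow(4)$ and $(2)\Rightarrow(5)$ are immediate: one drops the requirement on the diagonal subalgebra, and $[1,\infty)\setminus\{2\}\neq\emptyset$. So the content sits in $(1)\Rightarrow(2)$, $(4)\Rightarrow(1)$, and the case $p=2$ of $(5)\Rightarrow(1)$; moreover $(4)\Rightarrow(1)$ will be a routine adaptation of the corresponding step of Theorem~\ref{thm1} using the tools already set up in this section.

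For $(1)\Rightarrow(2)$ I would mimic the proof of $(1)\Rightarrow(2)$ in Theorem~\ref{thm1}, with the bijection $f\colon X\to Y$ there replaced by a suitable bijection $\beta\colon X\times\mathbb{N}\to Y\times\mathbb{N}$. Fix a coarse equivalence $f\colon X\to Y$ with coarse inverse $g$ and a constant $C$ with $d(g(f(x)),x)\le C$ and $d(f(g(y)),y)\le C$. Bounded geometry then forces $f$ to have uniformly bounded fibres (if $f(x)=f(x')$ then $d(x,x')\le 2C$) and $f(X)$ to be $C$-dense in $Y$; fixing $D\ge C$, each $y\in Y$ therefore satisfies $d(f(x),y)\le D$ for at least one, and at most boundedly many, $x\in X$. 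Using this, split each slice $\{y\}\times\mathbb{N}$, with uniformly positive density, among the finitely many such $x$ (say into residue classes), let $P_x$ be the union of the pieces assigned to $x$ over all relevant $y$ (which contains a positive-density subset of $\{f(x)\}\times\mathbb{N}$, hence is infinite), and define $\beta(x,\cdot)\colon\mathbb{N}\to P_x$ to be the enumeration of $P_x$ in increasing order of the second coordinate. By construction $\beta$ is a bijection satisfying (a) $d_Y(\pi_Y\beta(x,n),f(x))\le D$ for all $(x,n)$, and (b) $\beta(X\times\{0,\dots,n\})\subseteq Y\times\{0,\dots,m(n)\}$ for some function $m$; the same two properties hold for $\beta^{-1}$ (for (a) with $g$ in place of $f$, using that $g$ is uniformly expansive). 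Now let $U_\beta\colon\ell^p(X\times\mathbb{N})\to\ell^p(Y\times\mathbb{N})$ be the invertible isometry $\delta_{(x,n)}\mapsto\delta_{\beta(x,n)}$. Property (a), together with metric properness of $g$ (which follows from $f\circ g$ being close to $\mathrm{id}_Y$), shows that conjugation by $U_\beta$ carries an operator of finite propagation in the $X$-direction to one of finite propagation in the $Y$-direction; property (b) shows it carries operators supported on $X\times(\text{finite})$ to operators supported on $Y\times(\text{finite})$. Hence $\mathrm{Ad}(U_\beta)$ maps $\mathbb{C}_u^p[X]\otimes\overline{M}^p_\infty$ into $\mathbb{C}_u^p[Y]\otimes\overline{M}^p_\infty$ and, being isometric, maps $B^p_u(X)\otimes\overline{M}^p_\infty$ into $B^p_u(Y)\otimes\overline{M}^p_\infty$ and $\ell^\infty(X)\otimes C_0(\mathbb{N})$ into $\ell^\infty(Y)\otimes C_0(\mathbb{N})$; applying the same reasoning to $\beta^{-1}$ gives surjectivity. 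As this works for every $p\in[1,\infty)$, we get $(2)$. I expect the construction of a $\beta$ satisfying (a) and (b) simultaneously to be the main obstacle: an arbitrary bijective coarse equivalence $X\times\mathbb{N}\to Y\times\mathbb{N}$ need not satisfy the grading condition (b), so the matching must be carried out so as to respect the $\mathbb{N}$-directions on both sides — this is the $\ell^p$ counterpart of the combinatorics behind \cite[Theorem~4]{BNW} and \cite[Lemma~8]{MR0043392}.

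For $(4)\Rightarrow(1)$, given an isometric isomorphism $B^p_u(X)\otimes\overline{M}^p_\infty\cong B^p_u(Y)\otimes\overline{M}^p_\infty$ with $p\in[1,\infty)\setminus\{2\}$, Lemma~\ref{spatially implemented 2} produces an invertible isometry $U\colon\ell^p(X\times\mathbb{N})\to\ell^p(Y\times\mathbb{N})$ implementing it, and Proposition~\ref{Lam2} realizes $U$ as a weighted composition operator with underlying invertible $g\colon X\times\mathbb{N}\to Y\times\mathbb{N}$. Set $f(x)=\pi_Y(g(x,0))$ and $f'(y)=\pi_X(g^{-1}(y,0))$; by Lemma~\ref{Lem2s} these are uniformly expansive, so it remains to see that $f'\circ f$ and $f\circ f'$ are close to the identities. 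Writing $g(x,0)=(f(x),j)$, we have $|\langle U^{-1}\delta_{(f(x),j)},\delta_{(x,0)}\rangle|=1$, while $|\langle U^{-1}\delta_{(f(x),0)},\delta_{(f'(f(x)),s)}\rangle|=1$ for a suitable $s$ by the definition of $f'$; since $d_Y(f(x),f(x))=0$, the form of Lemma~\ref{Lem2s} with $X$ and $Y$ interchanged and $U$ replaced by $U^{-1}$, applied with $R=0$, gives $d_X(x,f'(f(x)))\le S$ with $S$ independent of $x$. Symmetrically $f\circ f'$ is close to $\mathrm{id}_Y$, so $f$ and $f'$ are mutually inverse coarse equivalences and $X$ and $Y$ are coarsely equivalent.

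Finally, for $(5)\Rightarrow(1)$: if $p\neq2$ then $(5)$ trivially implies $(4)$ and we are done by the previous paragraph. If $p=2$, note that the minimal projections of $\ell^\infty(X)\otimes C_0(\mathbb{N})$ are precisely the rank-one diagonal projections $e_{(x,n),(x,n)}$, so a diagonal-preserving isometric isomorphism $\phi$ sends them to the analogous projections for $Y$, which forces $\phi(e_{(x,n),(x',n')})$ to be a unimodular scalar multiple of a matrix unit $e_{\beta(x,n),\beta(x',n')}$ for a bijection $\beta\colon X\times\mathbb{N}\to Y\times\mathbb{N}$; this is exactly the property of $\phi$ used in the proof of Lemma~\ref{Lem2s}, so that lemma and the closeness argument of the previous paragraph apply verbatim and yield a coarse equivalence between $X$ and $Y$. (Alternatively, for $p=2$ one may invoke \cite[Lemma~8]{MR0043392} together with the diagonal-preserving $C^\ast$-algebraic results in the spirit of \cite{MR3116573, WW, BF18}, which require neither property A nor finite decomposition complexity.) This closes both cycles and completes the proof.
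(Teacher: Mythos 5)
Your proof is correct, and for the core implication $(4)\Rightarrow(1)$ it coincides with the paper's argument (Lemma~\ref{spatially implemented 2}, Proposition~\ref{Lam2}, Lemma~\ref{Lem2s}, and the same closeness argument with $R=0$). The two places where you diverge are worth recording. For $(1)\Rightarrow(2)$ the paper also follows \cite{BNW}, but it first reduces to a \emph{surjective} coarse equivalence $f$, identifies $X$ with a subset of $Y\times\{1,\dots,N\}$ via an enumeration $\pi$ of the fibres, uses the explicit bijection $(x,j)\mapsto(f(x),\pi(x)+jN(f(x)))$ together with the partial isometries $V_i$ and their reverses, and then handles non-surjectivity by factoring through $f(X)$. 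Your construction distributes residue classes of each slice $\{y\}\times\mathbb{N}$ among the boundedly many nearby points of $f(X)$, treating the general case in one step; this is the same combinatorial idea, and your conditions (a) and (b) isolate exactly what must be verified (finite propagation in the $Y$-direction, and preservation of the $\mathbb{N}$-grading so that the image stays in $B^p_u(Y)\otimes\overline{M}^p_\infty$ rather than merely in $B(\ell^p(Y\times\mathbb{N}))$) --- condition (b) is indeed the point an arbitrary bijective coarse equivalence $X\times\mathbb{N}\to Y\times\mathbb{N}$ would miss. The genuinely different step is $(5)\Rightarrow(1)$ for $p=2$: the paper invokes \cite[Lemma~8]{MR0043392} to replace the isometric isomorphism by a $*$-isomorphism and then appeals to Proposition~\ref{p=2 case}, whose proof passes through the coarse groupoid $G(X)$, groupoid equivalence, and the reconstruction theorem of \cite{MR2460017}. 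Your argument --- that the minimal idempotents of $\ell^\infty(X)\otimes C_0(\mathbb{N})$ are precisely the $e_{(x,n),(x,n)}$, so a diagonal-preserving, spatially implemented isometric isomorphism carries matrix units to unimodular multiples of matrix units along a bijection $\beta:X\times\mathbb{N}\to Y\times\mathbb{N}$, after which the proof of Lemma~\ref{Lem2s} and the closeness argument run with $\beta$ in place of $g$ --- is more elementary and self-contained: it needs only Lemma~\ref{spatially implemented 2} (valid for all $p$, including $p=2$), the computation $|\lambda|=1$ from isometry of $\phi$, and the fact that $U\delta_{(x,n)}$ is a unimodular multiple of $\delta_{\beta(x,n)}$ because $U$ is a surjective isometry. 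What the paper's route buys in exchange is the stronger content of Proposition~\ref{p=2 case} (the equivalence with groupoid equivalence), which is of independent interest.
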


\begin{proof}
(1) $\Rightarrow$ (2):
The proof we present here is essentially the same as the proof of \cite[Theorem 4]{BNW}.

We first assume that there is a surjective coarse equivalence $f:X\rightarrow Y$. Since $f$ is a coarse equivalence, there exists $R>0$ such that for each $y\in Y$, the preimage $f^{-1}(y)$ lies in some $R$-ball in $X$. Then since $X$ has bounded geometry, there exists $N$ such that the cardinality of $f^{-1}(y)$ is at most $N$ for each $y\in Y$. Define $N(y)$ to be the cardinality of $f^{-1}(y)$. For each $y\in Y$, enumerating the points of $f^{-1}(y)$ gives a bijection between $f^{-1}(y)$ and $\{1,\ldots,N(y)\}\subseteq\{1,\ldots,N\}$. We therefore obtain an identification of $X$ with a subset of $Y\times\{1,\ldots,N\}$. Let $\pi$ denote the corresponding projection from $X$ to $\{1,\ldots,N\}$, so that the identification is given by $x\mapsto(f(x),\pi(x))$.

Define a map $\phi:X\times\mathbb{N}\rightarrow Y\times\mathbb{N}$ by $\phi(x,j)=(f(x),\pi(x)+jN(f(x)))$. Since for each $y\in Y$ there is exactly one $x\in X$ satisfying $f(x)=y$ and $\pi(x)=i$ for $i=1,\ldots,N(y)$, the map $\phi$ is a bijection, which gives rise to an invertible isometry from $\ell^p(X\times\mathbb{N})$ to $\ell^p(Y\times\mathbb{N})$, and thus an isometric isomorphism $\Phi$ from $B(\ell^p(X\times\mathbb{N}))$ to $B(\ell^p(Y\times\mathbb{N}))$. We will show that $\Phi$ maps $B^p_u(X)\otimes \overline{M}^p_\infty$ into $B^p_u(Y)\otimes \overline{M}^p_\infty$, while $\Phi^{-1}$ maps $B^p_u(Y)\otimes \overline{M}^p_\infty$ into $B^p_u(X)\otimes \overline{M}^p_\infty$, and hence $\Phi$ restricts to an isometric isomorphism between $B^p_u(X)\otimes \overline{M}^p_\infty$ and $B^p_u(Y)\otimes \overline{M}^p_\infty$.

We consider the dense subalgebra of $B^p_u(X)\otimes \overline{M}^p_\infty$ consisting of sums of elementary tensors of the form $T\otimes e_{j,j'}$, where $T$ is a finite propagation operator on $\ell^p(X)$ and $e_{j,j'}$ is a matrix unit. It suffices to show that such elementary tensors are mapped by $\Phi$ into $B^p_u(Y)\otimes \overline{M}^p_\infty$.

Partition $X$ as $X=\bigcup_{n=1,\ldots,N,i=1,\ldots,n}X_{n,i}$, where \[ X_{n,i}=\{ x\in X:N(f(x))=n,\pi(x)=i \}. \]
We can write $T$ as \[ T=\sum_{\substack{n,n'\leq N,\\ i\leq n,i'\leq n'}}P_{n,i}TP_{n',i'}, \] where $P_{n,i}$ denotes the projection of $\ell^p(X)$ onto $\ell^p(X_{n,i})$.
Note that for each $i$, the restriction of $f$ to $X_i=\bigcup_{n\geq i}X_{n,i}$ is injective, and let $V_i:\ell^p(X_i)\rightarrow\ell^p(Y)$ denote the corresponding isometry. Also, let $V_i^\dagger:\ell^p(Y)\rightarrow\ell^p(X_i)$ denote the reverse (in the terminology of Phillips \cite[Definition 6.13]{Phil12}).

Now fix $n,n',i,i'$, and let $S=P_{n,i}TP_{n',i'}$. Then $\Phi(S\otimes e_{j,j'})=V_iSV_{i'}^\dagger\otimes e_{i+nj,i'+n'j'}$. Since $f$ is a coarse equivalence, the operator $V_iSV_{i'}^\dagger$ has finite propagation, and hence $V_iSV_{i'}^\dagger\otimes e_{i+nj,i'+n'j'}$ lies in $B^p_u(Y)\otimes \overline{M}^p_\infty$. Since this holds for each $n,n',i,i'$, we conclude that $\Phi(T\otimes e_{j,j'})\in B^p_u(Y)\otimes \overline{M}^p_\infty$.

Showing that $\Phi^{-1}$ maps $B^p_u(Y)\otimes \overline{M}^p_\infty$ into $B^p_u(X)\otimes \overline{M}^p_\infty$ is done in a similar manner, and we omit the details, referring the reader to the proof of \cite[Theorem 4]{BNW}.

Finally, to remove the assumption that the coarse equivalence is surjective, observe that given any coarse equivalence $f:X\rightarrow Y$, there are surjective coarse equivalences from both $X$ and $Y$ to the image $f(X)$. Hence we have isometric isomorphisms \[ B^p_u(X)\otimes \overline{M}^p_\infty\cong B^p_u(f(X))\otimes \overline{M}^p_\infty\cong B^p_u(Y)\otimes \overline{M}^p_\infty. \]
From the definition of $\Phi$, one sees that this isometric isomorphism maps $\ell^\infty(X)\otimes C_0(\mathbb{N})$ onto $\ell^\infty(Y)\otimes C_0(\mathbb{N})$.

(2) $\Rightarrow$ (3) $\Rightarrow$ (4) is trivial.

(4) $\Rightarrow$ (1):
If $B^p_u(X)\otimes \overline{M}^p_\infty$ and $B^p_u(Y)\otimes \overline{M}^p_\infty$ are isometrically isomorphic for some $p\in[1,\infty)\setminus\{2\}$, then by Lemma \ref{spatially implemented 2}, we have an invertible isometry $U:\ell^p(X\times\mathbb{N})\rightarrow\ell^p(Y\times\mathbb{N})$.
Let $g$ be as in Proposition \ref{Lam2}. Then Lemma \ref{Lem2s} implies that $f=\pi_Y(g(-,0))$ and $f'=\pi_X(g^{-1}(-,0))$ are uniformly expansive. It remains to be shown that $f\circ f'$ and $f'\circ f$ are close to the identity.

For each $y\in Y$ we have 
\[ |\langle U\delta_{f'(y),0},\delta_{f(f'(y)),m} \rangle|=1=|\langle U\delta_{f'(y),n},\delta_{y,0} \rangle| \] 
for some $n,m\in\mathbb{N}$ so Lemma \ref{Lem2s} implies the existence of $S\geq 0$ (independently of $y$) such that $d(f(f'(y)),y)\leq S$, i.e., $f\circ f'$ is close to the identity. Similarly, for each $x\in X$ we have 
\[ |\langle U^{-1}\delta_{f(x),0},\delta_{f'(f(x)),m} \rangle|=1=|\langle U^{-1}\delta_{f(x),n},\delta_{x,0} \rangle| \] 
for some $n,m\in\mathbb{N}$ so $f'\circ f$ is close to the identity.

(2) $\Rightarrow$ (5): It is clear.

(5) $\Rightarrow$ (1): If $p=2$, we are done by \cite[Lemma~8]{MR0043392} and Proposition~\ref{p=2 case}, which is independent of Theorem~\ref{thm2}. If $p \neq 2$, it is clear that (5) $\Rightarrow$ (4). 
\end{proof}

Given a countable group $\Gamma$ and $p\in[1,\infty)$, we may represent elements of $\ell^\infty(\Gamma)$ as multiplication operators on $\ell^p(\Gamma)$, and consider the left translation action of $\Gamma$ on $\ell^\infty(\Gamma)$. Then one can define an $L^p$ reduced crossed product $\ell^\infty(\Gamma)\rtimes_{\lambda,p}\Gamma$ just as how one defines the reduced crossed product $C^\ast$-algebra (cf. \cite[Definition 4.1.4]{BO}). Then the proof of \cite[Proposition 5.1.3]{BO} shows that $\ell^\infty(\Gamma)\rtimes_{\lambda,p}\Gamma$ is isometrically isomorphic to $B^p_u(\Gamma)$.

For finitely generated groups, we may replace coarse equivalence by quasi-isometry in Theorem \ref{thm2}. Moreover, if $\Gamma$ and $\Lambda$ are non-amenable finitely generated groups, then Corollary \ref{cor1} applies, so if they are quasi-isometric, then we get isometric isomorphisms between their $\ell^p$ uniform Roe algebras instead of just stable isometric isomorphisms.

\begin{cor} \label{cor:grps}
Let $\Gamma$ and $\Lambda$ be finitely generated groups. The following are equivalent:
\begin{enumerate}
\item $\Gamma$ and $\Lambda$ are quasi-isometric.
\item For every $p\in[1,\infty)$, there is an isometric isomorphism \[\phi:(\ell^\infty(\Gamma)\rtimes_{\lambda,p}\Gamma)\otimes \overline{M}^p_\infty\rightarrow (\ell^\infty(\Lambda)\rtimes_{\lambda,p}\Lambda)\otimes \overline{M}^p_\infty\] such that $\phi(\ell^\infty(\Gamma)\otimes C_0(\mathbb{N}))=\ell^\infty(\Lambda)\otimes C_0(\mathbb{N})$.
\item $(\ell^\infty(\Gamma)\rtimes_{\lambda,p}\Gamma)\otimes \overline{M}^p_\infty$ and $(\ell^\infty(\Lambda)\rtimes_{\lambda,p}\Lambda)\otimes \overline{M}^p_\infty$ are isometrically isomorphic for every $p\in[1,\infty)$.
\item $(\ell^\infty(\Gamma)\rtimes_{\lambda,p}\Gamma)\otimes \overline{M}^p_\infty$ and $(\ell^\infty(\Lambda)\rtimes_{\lambda,p}\Lambda)\otimes \overline{M}^p_\infty$ are isometrically isomorphic for some $p\in[1,\infty)\setminus\{2\}$.
\item For some $p\in[1,\infty)$, there is an isometric isomorphism \[\phi:(\ell^\infty(\Gamma)\rtimes_{\lambda,p}\Gamma)\otimes \overline{M}^p_\infty\rightarrow (\ell^\infty(\Lambda)\rtimes_{\lambda,p}\Lambda)\otimes \overline{M}^p_\infty\] such that $\phi(\ell^\infty(\Gamma)\otimes C_0(\mathbb{N}))=\ell^\infty(\Lambda)\otimes C_0(\mathbb{N})$.
\end{enumerate}
If moreover $\Gamma$ and $\Lambda$ are non-amenable, then these are also equivalent to:
\begin{enumerate}[resume]
\item $\ell^\infty(\Gamma)\rtimes_{\lambda,p}\Gamma$ and $\ell^\infty(\Lambda)\rtimes_{\lambda,p}\Lambda$ are isometrically isomorphic for every $p\in[1,\infty)$.
\item $\ell^\infty(\Gamma)\rtimes_{\lambda,p}\Gamma$ and $\ell^\infty(\Lambda)\rtimes_{\lambda,p}\Lambda$ are isometrically isomorphic for some $p\in[1,\infty)\setminus\{2\}$.
\end{enumerate}
\end{cor}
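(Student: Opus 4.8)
The plan is to deduce the corollary from Theorem~\ref{thm2} and Corollary~\ref{cor1} by means of the identification $\ell^\infty(\Gamma)\rtimes_{\lambda,p}\Gamma\cong B^p_u(\Gamma)$ furnished by the proof of \cite[Proposition~5.1.3]{BO}. First I would fix finite generating sets for $\Gamma$ and $\Lambda$ and equip the two groups with the associated word metrics; these are proper and left-invariant, they make $\Gamma$ and $\Lambda$ into metric spaces with bounded geometry, and the resulting coarse equivalence class does not depend on the chosen generating set. Since a word metric turns a finitely generated group into a quasi-geodesic space, a standard argument shows that $\Gamma$ and $\Lambda$ are quasi-isometric if and only if they are coarsely equivalent as metric spaces; thus statement~(1) may be replaced throughout by ``$\Gamma$ and $\Lambda$ are coarsely equivalent''.

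Next I would record that the isometric isomorphism $\ell^\infty(\Gamma)\rtimes_{\lambda,p}\Gamma\cong B^p_u(\Gamma)$ carries the canonical copy of $\ell^\infty(\Gamma)$, acting by multiplication operators on $\ell^p(\Gamma)$, onto the copy of $\ell^\infty(\Gamma)$ sitting inside $B^p_u(\Gamma)$ as the diagonal operators, and likewise for $\Lambda$. Tensoring with the identity on $\overline{M}^p_\infty$ and using the multiplicativity $\|a\otimes b\|=\|a\|\,\|b\|$ of the $L^p$ tensor norm yields an isometric isomorphism
\[ (\ell^\infty(\Gamma)\rtimes_{\lambda,p}\Gamma)\otimes\overline{M}^p_\infty\;\cong\;B^p_u(\Gamma)\otimes\overline{M}^p_\infty \]
that sends $\ell^\infty(\Gamma)\otimes C_0(\mathbb{N})$ onto the corresponding subalgebra, and similarly for $\Lambda$. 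Under this dictionary, conditions (2)--(5) of the corollary become exactly conditions (2)--(5) of Theorem~\ref{thm2} with $X=\Gamma$ and $Y=\Lambda$, so the equivalence of (1)--(5) is immediate from Theorem~\ref{thm2}.

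For the last two items I would additionally invoke the fact, recalled before Corollary~\ref{cor1}, that every uniformly discrete non-amenable metric space with bounded geometry is bijectively rigid (a consequence of \cite[Theorem~1.1]{Whyte}). In particular, when $\Gamma$ and $\Lambda$ are non-amenable, both word-metric spaces are bijectively rigid, so Corollary~\ref{cor1} applies to $X=\Gamma$ and $Y=\Lambda$. Translating items (3) and (4) of Corollary~\ref{cor1} through the identification $\ell^\infty(\Gamma)\rtimes_{\lambda,p}\Gamma\cong B^p_u(\Gamma)$ then shows that (6) and (7) are each equivalent to coarse equivalence of $\Gamma$ and $\Lambda$, hence to~(1).

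The argument is essentially bookkeeping built on Theorem~\ref{thm2} and Corollary~\ref{cor1}; the only points needing a little care are the transfer of the $\ell^\infty$-subalgebra (respectively $\ell^\infty\otimes C_0(\mathbb{N})$) condition through the crossed-product identification and the $L^p$ tensor product, together with the standard, but worth stating, coincidence of coarse equivalence and quasi-isometry for finitely generated groups. I do not expect any genuine obstacle beyond what is already handled in the cited results.
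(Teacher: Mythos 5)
Your proposal is correct and follows essentially the same route as the paper, which derives the corollary from Theorem~\ref{thm2} and Corollary~\ref{cor1} via the identification $\ell^\infty(\Gamma)\rtimes_{\lambda,p}\Gamma\cong B^p_u(\Gamma)$, the coincidence of quasi-isometry and coarse equivalence for finitely generated groups, and bijective rigidity of non-amenable spaces from \cite{Whyte}. Your explicit attention to how the diagonal copy of $\ell^\infty(\Gamma)$ and the subalgebra $\ell^\infty(\Gamma)\otimes C_0(\mathbb{N})$ transfer through the crossed-product identification is a reasonable elaboration of details the paper leaves implicit.
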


In the $p=2$ case, isometric isomorphisms between the algebras yield quasi-isometries between the groups if the groups have property A (or equivalently, if they are $C^\ast$-exact) by \cite[Corollary 6.2]{MR3116573}, \cite[Corollary 6.3]{MR3116573}, and \cite[Lemma~8]{MR0043392}.

Instead of $B^p_u(X)\otimes \overline{M}^p_\infty$, one may consider $\ell^p$ Roe-type algebras, as was done in the $C^\ast$-algebraic setting in \cite{MR3116573}, and also $B^p_u(X)\otimes K(\ell^p(\N))$, which is not quite a $\ell^p$ Roe-type algebra and is different from $B^p_u(X)\otimes \overline{M}^p_\infty$ when $p=1$.

\begin{defn} \label{Roetype}
Let $X$ be a metric space with bounded geometry, let $p\in[1,\infty)$, and let $S$ be either $\mathbb{N}$ or $\{1,\ldots,n\}$ for some $n\in\mathbb{N}$. An operator $T=(T_{xy})_{x,y\in X}$ in $B(\ell^p(X,\ell^p(S)))$ is said to be locally compact if $T_{xy}\in K(\ell^p(S))$ for all $x,y\in X$.
A subalgebra $A^p[X]$ of $B(\ell^p(X,\ell^p(S)))$ is called an algebraic $\ell^p$ Roe-type algebra if
\begin{itemize}
\item it consists only of locally compact, finite propagation operators,
\item it contains all finite propagation operators $T=(T_{xy})_{x,y\in X}$ with the property that there exists $N\in\mathbb{N}$ such that the rank of $T_{xy}$ is at most $N$ for all $x,y\in X$.
\end{itemize}
An $\ell^p$ Roe-type algebra $A^p(X)$ is the operator norm closure of some algebraic $\ell^p$ Roe-type algebra.
\end{defn}

\begin{rem}
In the definition of locally compact operators, we could have required all $T_{x,y}$ to belong to $\overline{M}^p_S:=\overline{\bigcup_{n\in S}M_n(\C)}\subset B(\ell^p(S))$, which makes a difference only when $p=1$. This condition will not be satisfied by the $\ell^1$ uniform algebra $UB^1(X)$. Nevertheless, with this alternative definition, the results below (Theorem \ref{thm:Roetype} and Corollary \ref{cor:Roetype}) still hold.
\end{rem}

The $\ell^p$ uniform Roe algebra $B^p_u(X)$ is an example of an $\ell^p$ Roe-type algebra.
Recall the algebras $UB^p(X)$, $B^p(X)$, and $B^p_s(X)$ defined on page \pageref{egRoetype} after Definition \ref{pRoedef}, and that $B^p_s(X)\cong B^p_u(X)\otimes K(\ell^p(\N))$ for all $p\in[1,\infty)$. The algebras $UB^p(X)$ and $B^p(X)$ are also $\ell^p$ Roe-type algebras but $B^p_s(X)$ is not. All three algebras are coarse invariants while $B^p_u(X)$ is not. Indeed, if $X$ and $Y$ are coarsely equivalent, then the isometric isomorphism $\Phi:B(\ell^p(X,\ell^p))\rightarrow B(\ell^p(Y,\ell^p))$ in the proof of (1) $\Rightarrow$ (2) in Theorem \ref{thm2} restricts to isometric isomorphisms $B^p_s(X)\cong B^p_s(Y)$, $UB^p(X)\cong UB^p(Y)$, and $B^p(X)\cong B^p(Y)$.

On the other hand, note that all operators of the form $\xi\otimes\delta_{x_0,n_0}$, where $\xi\in\ell^p(X,\ell^p(S))$, $x_0\in X$, and $n_0\in S$, are contained in any $\ell^p$ Roe-type algebra, which enables one to show that Lemma \ref{spatially implemented 2} holds for $\ell^p$ Roe-type algebras by following the proof of Lemma \ref{spatially implemented}. One can then show that the implication (4) $\Rightarrow$ (1) in Theorem \ref{thm2} holds with $B^p_u(X)\otimes\overline{M}^p_\infty$ replaced by any $\ell^p$ Roe-type algebra or $B^p_s(X)$.

Thus, we obtain the following analogs of \cite[Theorem~4.1, Theorem~5.1 and Theorem~6.1]{MR3116573}, which deal with the $p=2$ case. It is worth noting that the metric spaces are assumed to have property A in \cite[Theorem~4.1 and Theorem~6.1]{MR3116573}.

\begin{thm} \label{thm:Roetype}
Let $X$ and $Y$ be metric spaces with bounded geometry, let $p\in[1,\infty)\setminus\{2\}$, and let $A^p(X)$ and $A^p(Y)$ be $\ell^p$ Roe-type algebras associated to $X$ and $Y$ respectively. If $A^p(X)$ and $A^p(Y)$ are isometrically isomorphic, then $X$ and $Y$ are coarsely equivalent. The same conclusion holds if $B^p_s(X)$ and $B^p_s(Y)$ are isometrically isomorphic.

The converse holds for the algebras $UB^p(X)$, $B^p(X)$, and $B^p_s(X)$. In particular, since $B^p_s(X)\cong B^p_u(X)\otimes K(\ell^p(\N))$ for $p\in[1,\infty)$, Theorem \ref{thm2} and Corollary \ref{cor:grps} also hold with $\overline{M}^p_\infty$ replaced by $K(\ell^p(\N))$.
\end{thm}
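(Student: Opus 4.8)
The plan is to reduce everything to tools already established in the excerpt. First I would handle the forward direction for a general $\ell^p$ Roe-type algebra $A^p(X)$. As noted in the discussion preceding the statement, every $\ell^p$ Roe-type algebra contains all operators of the form $\xi\otimes\delta_{x_0,n_0}$ (these have propagation zero and rank one, hence lie in the algebra), so the proof of Lemma \ref{spatially implemented} goes through verbatim with $X$ replaced by $X\times S$: an isometric isomorphism $\phi:A^p(X)\to A^p(Y)$ is spatially implemented by an invertible isometry $U:\ell^p(X\times S_X)\to\ell^p(Y\times S_Y)$ (after possibly enlarging the finite index sets to $\mathbb{N}$, so we may take $S_X=S_Y=\mathbb{N}$ and regard both algebras as acting on $\ell^p(X\times\mathbb{N})$, which is the setting of Lemma \ref{spatially implemented 2}). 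Then I would invoke Lamperti's theorem in the form of Proposition \ref{Lam2} to obtain $h:Y\times\mathbb{N}\to\mathbb{T}$ and an invertible $g:X\times\mathbb{N}\to Y\times\mathbb{N}$ with $\phi(e_{(x,n),(x',n')}) = h(g(x,n))\overline{h(g(x',n'))}\,e_{g(x,n),g(x',n')}$.

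The key point is that Lemma \ref{Lem2s} applies here essentially unchanged. Its proof used only that (i) $\phi$ sends matrix units to scalar multiples of matrix units (which we just established), and (ii) a strongly convergent sum $\sum_k e_{(x_k,n_k),(x'_k,n'_k)}$ with the $x_k$ eventually $2R$-separated and $d(x_k,x'_k)\le R$ lies in the algebra, and its $\phi$-image — a strongly convergent sum of scalar multiples of $e_{(y_k,m_k),(y'_k,m'_k)}$ — lies in $A^p(Y)$. For (ii): such a sum is a locally compact (each entry has rank $\le 1$), finite-propagation (propagation $\le R$) operator, and it has uniformly bounded rank in each matrix entry (at most $1$), so by the defining axioms of an algebraic $\ell^p$ Roe-type algebra it lies in the algebra; and compressing to the $X$-coordinates (resp. $Y$-coordinates) via the projections $\pi_X,\pi_Y$ gives a strongly convergent sum $\sum_k e_{y_k,y'_k}$ in $B^p_u(Y)$, exactly as in the original argument, contradicting $d(y_k,y'_k)\to\infty$. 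Hence $f=\pi_Y(g(\cdot,0))$ and $f'=\pi_X(g^{-1}(\cdot,0))$ are uniformly expansive, and the closeness of $f\circ f'$ and $f'\circ f$ to the identity follows by the same inner-product computation as in the proof of (4) $\Rightarrow$ (1) in Theorem \ref{thm2}. For $B^p_s(X)$ specifically, the same works: $B^p_s(X)$ also contains all the rank-one operators $\xi\otimes\delta_{x_0,n_0}$ and all finite-propagation operators with uniformly bounded finite-dimensional range, so Lemma \ref{spatially implemented 2} and Lemma \ref{Lem2s} apply.

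For the converse, I would observe that it has already been proved: in the proof of (1) $\Rightarrow$ (2) in Theorem \ref{thm2}, the spatially implemented isometric isomorphism $\Phi:B(\ell^p(X,\ell^p))\to B(\ell^p(Y,\ell^p))$ coming from a (surjective, then general) coarse equivalence restricts to isometric isomorphisms on each of $B^p_s(X)$, $UB^p(X)$, and $B^p(X)$ — this is exactly the assertion in the paragraph following Definition \ref{Roetype}. The final sentence, that Theorem \ref{thm2} and Corollary \ref{cor:grps} also hold with $\overline{M}^p_\infty$ replaced by $K(\ell^p(\mathbb{N}))$, is then immediate: $B^p_s(X)\cong B^p_u(X)\otimes K(\ell^p(\mathbb{N}))$ for all $p\in[1,\infty)$ (stated right after the definition of $B^p_s$), so the $B^p_s$ version of the statement \emph{is} the $K(\ell^p(\mathbb{N}))$ version.

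The main obstacle is a bookkeeping one rather than a conceptual one: one must be careful that the index sets $S$ in Definition \ref{Roetype} may be finite and may differ between $X$ and $Y$, so that Proposition \ref{Lam2} is being applied to $\ell^p$ of a set of the form (metric space) $\times$ (countable set); and one must check that the closure operation does not spoil the membership argument in Lemma \ref{Lem2s} — but since the offending operator is already a norm-limit of finite-propagation bounded-rank operators and in fact lies in the \emph{algebraic} Roe-type algebra by its axioms, no issue arises. I expect no genuine difficulty beyond verifying that the two structural inputs — "contains all $\xi\otimes\delta_{x_0,n_0}$" and "contains all finite-propagation uniformly-bounded-rank operators" — are exactly the two axioms in Definition \ref{Roetype}, which they are.
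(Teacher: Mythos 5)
Your proposal is correct and follows essentially the same route the paper takes: it sketches exactly the adaptation of Lemma \ref{spatially implemented 2}, Proposition \ref{Lam2}, and Lemma \ref{Lem2s} indicated in the discussion preceding the theorem, and obtains the converse from the restriction of $\Phi$ in the proof of (1) $\Rightarrow$ (2) of Theorem \ref{thm2}. The only slip is cosmetic: a general $\xi\otimes\delta_{x_0,n_0}$ need not have propagation zero, but it is a norm limit of finite-propagation rank-one operators and hence still lies in every $\ell^p$ Roe-type algebra, which is all the argument requires.
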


\begin{cor} \label{cor:Roetype}
Let $X$ and $Y$ be metric spaces with bounded geometry, let $p\in[1,\infty)$, and let $A^p[X]$ and $A^p[Y]$ be algebraic $\ell^p$ Roe-type algebras associated to $X$ and $Y$ respectively. If $A^p[X]$ and $A^p[Y]$ are isometrically isomorphic, then $X$ and $Y$ are coarsely equivalent. The same conclusion holds if $\mathbb{C}^p_s[X]$ and $\mathbb{C}^p_s[Y]$ are isometrically isomorphic.

The converse holds for the algebras $U\mathbb{C}^p[X]$, $\mathbb{C}^p[X]$, and $\mathbb{C}^p_s[X]$.
\end{cor}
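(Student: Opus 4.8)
The plan is to transfer everything to the operator-norm closures, where Theorem~\ref{thm:Roetype} already does the work when $p\neq 2$, and to feed the $p=2$ case into the known purely algebraic $C^\ast$-results. First I would observe that an isometric algebra isomorphism $\phi\colon A^p[X]\to A^p[Y]$ between algebraic $\ell^p$ Roe-type algebras is uniformly continuous, so it extends uniquely to an isometric linear map $\bar\phi$ between the operator-norm closures; since multiplication is norm-continuous, $\bar\phi$ is multiplicative, it is injective because it is isometric, and it is onto because its image is closed (an isometric image of a complete space) and dense (it contains $A^p[Y]$). Hence $\bar\phi$ is an isometric isomorphism between the $\ell^p$ Roe-type algebras $A^p(X)=\overline{A^p[X]}$ and $A^p(Y)=\overline{A^p[Y]}$; the same applies to $\mathbb{C}^p_s[X]$ and $\mathbb{C}^p_s[Y]$, whose closures are $B^p_s(X)$ and $B^p_s(Y)$. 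For $p\in[1,\infty)\setminus\{2\}$, Theorem~\ref{thm:Roetype} then yields the coarse equivalence. (Equivalently, one can re-run the proof of Theorem~\ref{thm:Roetype} directly at the algebraic level: every operator it uses --- the rank-one elements $\xi\otimes\delta_{x_0,n_0}$ that spatially implement $\phi$ as in Lemma~\ref{spatially implemented 2}, the matrix units, and the strongly convergent series $\sum_k e_{(x_k,n_k),(x'_k,n'_k)}$ from the proof of Lemma~\ref{Lem2s} --- already lies in any algebraic $\ell^p$ Roe-type algebra, since that series is a finite-propagation, locally compact operator whose entries have uniformly bounded finite rank.)

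For $p=2$ I would argue as follows. The extension $\bar\phi$ above is now an isometric isomorphism of $C^\ast$-algebras (the completed Roe-type $C^\ast$-algebra, respectively the stable uniform Roe $C^\ast$-algebra), so by \cite[Lemma~8]{MR0043392} it is automatically a $\ast$-isomorphism. Since $\bar\phi$ carries the dense subalgebra $A^2[X]$ onto $A^2[Y]$ (respectively $\mathbb{C}^2_s[X]$ onto $\mathbb{C}^2_s[Y]$), it restricts to a $\ast$-isomorphism between the algebraic algebras, and the coarse equivalence then follows from the purely algebraic $p=2$ statements recorded in the introduction, namely \cite[Theorem~5.1 and Theorem~6.1]{MR3116573} together with \cite{WW} and \cite[Theorem~8.1]{BF18}.

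For the converse, suppose $X$ and $Y$ are coarsely equivalent and recall the isometric isomorphism $\Phi\colon B(\ell^p(X,\ell^p))\to B(\ell^p(Y,\ell^p))$ built in the proof of $(1)\Rightarrow(2)$ of Theorem~\ref{thm2}. It is spatially implemented by an invertible isometry coming from a bijection of index sets induced by the coarse equivalence (after reducing to the surjective case and passing through the image $f(X)$), hence it maps finite-propagation operators to finite-propagation operators and preserves the relevant entrywise rank and compactness conditions; therefore, just as it restricts to isometric isomorphisms $UB^p(X)\cong UB^p(Y)$, $B^p(X)\cong B^p(Y)$ and $B^p_s(X)\cong B^p_s(Y)$, it restricts to isometric isomorphisms $U\mathbb{C}^p[X]\cong U\mathbb{C}^p[Y]$, $\mathbb{C}^p[X]\cong \mathbb{C}^p[Y]$ and $\mathbb{C}^p_s[X]\cong \mathbb{C}^p_s[Y]$.

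I expect the only genuinely delicate point to be the passage between the uncompleted and completed algebras: one must check that the isometric algebra isomorphism really does extend to an isometric isomorphism of the completions and that this extension carries the dense subalgebra of one onto that of the other, so that for $p=2$ the $\ast$-isomorphism produced by \cite[Lemma~8]{MR0043392} can be restricted back to the algebraic level. Everything else is a direct appeal to Theorem~\ref{thm:Roetype}, Theorem~\ref{thm2}, and the cited $p=2$ results.
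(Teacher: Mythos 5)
Your main route is correct and is essentially the paper's (largely implicit) argument, which parallels the proof of Corollary~\ref{cor2}: extend the isometric isomorphism of the uncompleted algebras to an isometric isomorphism of their operator-norm closures, invoke Theorem~\ref{thm:Roetype} for $p\neq 2$, handle $p=2$ by upgrading to a $*$-isomorphism via \cite[Lemma~8]{MR0043392} and feeding the restriction back into the purely algebraic results of \cite{MR3116573}, \cite{WW}, \cite{BF18}, and obtain the converse by restricting the spatially implemented isomorphism $\Phi$ from the proof of $(1)\Rightarrow(2)$ of Theorem~\ref{thm2} to the uncompleted algebras.

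One warning about your parenthetical ``equivalently, re-run the proof at the algebraic level'': it is \emph{not} true that the rank-one operators $\xi\otimes\delta_{x_0,n_0}$ for arbitrary $\xi\in\ell^p(X,\ell^p(S))$ lie in an algebraic $\ell^p$ Roe-type algebra. Such an operator has propagation $\sup\{d(x,x_0):\xi|_{\{x\}\times S}\neq 0\}$, which is infinite for generic $\xi$ when $X$ is unbounded, so $A^p[X]$ contains only those with $\xi$ supported over a bounded subset of $X$. Hence the analogue of Lemma~\ref{spatially implemented 2} cannot be established at the algebraic level --- the construction $U\xi=\phi(\xi\otimes\delta_{x_0,n_0})f$ requires $\phi$ to be defined on all of these rank-one operators --- and the passage to the completion in your first paragraph is genuinely unavoidable rather than a matter of convenience. (The series $\sum_k e_{(x_k,n_k),(x'_k,n'_k)}$, by contrast, does lie in $A^p[X]$, as you say, since its propagation is at most $R$ and its entries have rank at most one.) Since your primary argument does pass to the completion and correctly identifies that passage as the delicate step, the proof as a whole stands.
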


Finally, let us return to the case $p=2$. We will denote the $\ell^2$ uniform Roe algebra of a metric space $X$ by $C_u^*(X)$ as in the literature.

Let $(X,d)$ be a (discrete) metric space with bounded geometry. For every $R>0$, we consider the $R$-neighbourhood of the diagonal in $X\times X$: $\Delta_R:=\{(x,y)\in X\times X: d(x,y)\leq R\}$. Define
\begin{align*}
G(X):=\bigcup_{R>0}\overline{\Delta}_R\subseteq \beta(X\times X).
\end{align*}
It turns out that the domain, range, inversion and multiplication maps on the pair groupoid $X\times X$ have unique continuous extensions to $G(X)$. With respect to these extensions, $G(X)$ becomes a principal, étale, locally compact $\sigma$-compact Hausdorff topological groupoid with the unit space $\beta X$ (see \cite[Proposition~3.2]{MR1905840} or \cite[Theorem~10.20]{Roe03}). Since $G(X)^{(0)}=\beta X$ is totally disconnected, $G(X)$ is also ample (see \cite[Proposition 4.1]{E10}). Moreover, the uniform Roe algebra $C_u^*(X)$ of $X$ is naturally isomorphic to the reduced groupoid $C^*$-algebra of $G(X)$, which maps $\ell^\infty(X)$ onto $C(G(X)^{(0)})$ (see \cite[Proposition~10.29]{Roe03} for a proof). We could compare the following proposition with \cite[Corollary~1.16]{WW}.

\begin{prop}\label{p=2 case}
Let $X$ and $Y$ be metric spaces with bounded geometry. Then the following are equivalent:
	\begin{itemize}
\item[(1)] $X$ and $Y$ are coarsely equivalent.
		\item[(2)] $G(X)$ and $G(Y)$ are equivalent as topological groupoids.
		\item[(3)] $G(X)\times \mathcal{R}$ and $G(Y)\times \mathcal{R}$ are isomorphic, where $\mathcal{R}$ denotes the full countable equivalence relation on $\N$. 
	\item[(4)] $(l^{\infty}(X)\otimes C_0(\N),C_u^*(X)\otimes K(\ell^2(\N))\cong (l^{\infty}(Y)\otimes C_0(\N),C_u^*(Y)\otimes K(\ell^2(\N)))$.
	
			\end{itemize}
\end{prop}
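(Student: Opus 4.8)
The plan is to establish the three equivalences $(1)\Leftrightarrow(4)$, $(3)\Leftrightarrow(4)$ and $(2)\Leftrightarrow(3)$. The first carries the coarse-geometric content and is proved by exactly the methods of Sections~2--3; the other two are structural and follow from the groupoid machinery underlying the identifications $C_u^*(X)\cong C_r^*(G(X))$ and $\ell^\infty(X)\cong C(\beta X)=C(G(X)^{(0)})$. The point that makes $(1)\Leftrightarrow(4)$ work at $p=2$, where Lamperti's theorem is unavailable, is that in $(4)$ one assumes the isomorphism preserves the distinguished abelian subalgebra, and this hypothesis delivers precisely the conclusion Lamperti's theorem supplies for $p\neq 2$: the isomorphism carries matrix units to scalar multiples of matrix units.

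For $(1)\Rightarrow(4)$ I would run the argument of the implication $(1)\Rightarrow(2)$ of Theorem~\ref{thm2}, specialized to $p=2$. Since $\overline{M}^2_\infty=K(\ell^2(\N))$ and the $L^2$ tensor product of concrete $C^*$-algebras is the spatial $C^*$-tensor product, that argument produces an isometric $*$-isomorphism $C_u^*(X)\otimes K(\ell^2(\N))\to C_u^*(Y)\otimes K(\ell^2(\N))$ carrying $\ell^\infty(X)\otimes C_0(\N)$ onto $\ell^\infty(Y)\otimes C_0(\N)$, which is exactly $(4)$; this uses only the proof of Theorem~\ref{thm2}, not its statement, so there is no circularity. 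For $(4)\Rightarrow(1)$ I would argue as in Lemma~\ref{Lem2s} and the implication $(4)\Rightarrow(1)$ of Theorem~\ref{thm2}. By Lemma~\ref{spatially implemented 2} the isometric isomorphism $\Phi$ is spatial, $\Phi=\mathrm{Ad}(U)$ for a unitary $U\colon\ell^2(X\times\N)\to\ell^2(Y\times\N)$. Restricting $\Phi$ to the commutative subalgebras $\ell^\infty(X)\otimes C_0(\N)=C_0(\beta X\times\N)$ and $\ell^\infty(Y)\otimes C_0(\N)=C_0(\beta Y\times\N)$ and using that the isolated points of $\beta X\times\N$ are precisely $X\times\N$, one obtains a bijection $g\colon X\times\N\to Y\times\N$ with $U\delta_{x,n}=c_{x,n}\,\delta_{g(x,n)}$ for unimodular scalars $c_{x,n}$; hence $|\langle U\delta_{x,n},\delta_{g(x,n)}\rangle|=1$ for all $(x,n)$, and $\Phi(e_{(x,n),(x',n')})=c_{x,n}\overline{c_{x',n'}}\,e_{g(x,n),g(x',n')}$. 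These are exactly the ingredients used in the proof of Lemma~\ref{Lem2s}, so that lemma applies with $g$ in place of the map supplied by Proposition~\ref{Lam2} and shows that $f=\pi_Y(g(-,0))$ and $f'=\pi_X(g^{-1}(-,0))$ are uniformly expansive; the strong-convergence argument in the proof of $(4)\Rightarrow(1)$ of Theorem~\ref{thm2} then shows $f\circ f'$ and $f'\circ f$ are close to the identity, so $X$ and $Y$ are coarsely equivalent.

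For $(3)\Leftrightarrow(4)$: with $\mathcal R=\N\times\N$ the pair groupoid, so $C_r^*(\mathcal R)\cong K(\ell^2(\N))$, the groupoid $G(X)\times\mathcal R$ is principal, \'etale, ample and $\sigma$-compact with unit space $\beta X\times\N$, and there is a canonical identification of $C_r^*(G(X)\times\mathcal R)$ with $C_u^*(X)\otimes K(\ell^2(\N))$ carrying $C_0((G(X)\times\mathcal R)^{(0)})=C_0(\beta X\times\N)$ onto $\ell^\infty(X)\otimes C_0(\N)$, the latter being a Cartan subalgebra (it is the tensor product of the Cartan pairs $\ell^\infty(X)\subseteq C_u^*(X)$ and $C_0(\N)\subseteq K(\ell^2(\N))$). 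An isomorphism of groupoids induces an isomorphism of reduced $C^*$-algebras preserving the unit-space subalgebras, giving $(3)\Rightarrow(4)$; conversely, since $G(X)\times\mathcal R$ is topologically principal and \'etale, Renault's reconstruction theorem recovers the groupoid from the Cartan pair up to isomorphism (in the ample, not-necessarily-second-countable form now standard for uniform Roe algebras, cf.\ \cite{MR3116573, WW}), which gives $(4)\Rightarrow(3)$. For $(2)\Leftrightarrow(3)$: $\mathcal R$ is equivalent to the trivial groupoid on a point (via $\N$), so $G(X)$ is equivalent to $G(X)\times\mathcal R$; hence $(3)$ yields $G(X)\sim G(X)\times\mathcal R\cong G(Y)\times\mathcal R\sim G(Y)$, which is $(2)$. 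Conversely, an equivalence between the $\sigma$-compact \'etale groupoids $G(X)$ and $G(Y)$ can be promoted to an isomorphism $G(X)\times\mathcal R\cong G(Y)\times\mathcal R$ by the groupoid stabilization theorem (the groupoid counterpart of Brown--Green--Rieffel), giving $(3)$.

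The main obstacle is making $(4)\Rightarrow(1)$ precise: passing from the spatial isomorphism to genuine uniform expansiveness and to the ``close to the identity'' estimates is exactly the work carried out in Lemma~\ref{Lem2s} and in the proof of Theorem~\ref{thm2}, and the only genuinely new input is the elementary observation above that a subalgebra-preserving isomorphism must send matrix units to scalar multiples of matrix units, which is what lets Lamperti's theorem be dispensed with at $p=2$. A secondary point requiring care, though routine in this context, is invoking Renault's reconstruction theorem and the groupoid stabilization result over the non-second-countable unit space $\beta X$.
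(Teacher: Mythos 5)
Your overall route is sound but genuinely different from the paper's. The paper proves the cycle $(1)\Rightarrow(2)\Leftrightarrow(3)\Rightarrow(4)\Rightarrow(3)\Rightarrow(1)$, where $(1)\Rightarrow(2)$ is \cite[Corollary~3.6]{MR1905840}, $(2)\Leftrightarrow(3)$ is \cite[Theorem~2.1]{MR3601549}, $(4)\Rightarrow(3)$ is Renault's reconstruction \cite[Proposition~4.13]{MR2460017}, and --- crucially --- $(3)\Rightarrow(1)$ is obtained by identifying $C_c(G(X)\times\mathcal{R})$ with $\C_s[X]$ and invoking the purely \emph{algebraic} rigidity theorem \cite[Theorem~6.1]{MR3116573}; the coarse-geometric content therefore never passes through the analytic machinery of Sections~2--3. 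You instead prove $(1)\Leftrightarrow(4)$ directly by that analytic machinery, with the diagonal-preservation hypothesis standing in for Lamperti's theorem. Your key observation --- that a diagonal-preserving spatial $*$-isomorphism must permute the minimal projections of $C_0(\beta X\times\N)$, hence satisfies $U\delta_{x,n}=c_{x,n}\delta_{g(x,n)}$ and sends matrix units to unimodular multiples of matrix units --- is correct and is exactly the input the proof of Lemma~\ref{Lem2s} consumes. Your handling of $(2)\Leftrightarrow(3)\Leftrightarrow(4)$ coincides with the paper's, including the (shared) reliance on extending Renault's theorem and the stabilization theorem beyond second countability.

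There is, however, one step in your $(4)\Rightarrow(1)$ that fails as written; the same issue is latent in the paper's Lemma~\ref{Lem2s}, but the paper's proof of this Proposition bypasses that lemma entirely, so you cannot simply defer to it. Uniform expansiveness of $f$ and $f'$ is fine, since there the contradiction argument only ever uses $\N$-indices $n_k=n'_k=0$ and the test operator is the elementary tensor $\left(\sum_k e_{x_k,x'_k}\right)\otimes e_{0,0}$. But in the close-to-the-identity step one applies the lemma with $x=x'=f'(y)$ and index pairs $(0,n(y))$, where $g^{-1}(y,0)=(f'(y),n(y))$ and $n(y)$ is a priori unbounded in $y$. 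The resulting test operator $\sum_k e_{(x_k,0),(x_k,n_k)}$ with $n_k\to\infty$ is \emph{not} an element of $C_u^*(X)\otimes K(\ell^2(\N))$: every $T$ in that algebra satisfies $\|T-T(1\otimes q_N)\|\to 0$, where $(q_N)$ is the standard approximate unit of $K(\ell^2(\N))$, whereas this operator satisfies $\|T-T(1\otimes q_N)\|=1$ for every $N$. So the strong-convergence contradiction cannot be run with these indices.

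The repair is short and uses only your hypotheses. Apply $\Phi$ to $1\otimes e_{0,0}$: its image is the diagonal projection $\sum_{x\in X}e_{g(x,0),g(x,0)}$, and membership in $C_u^*(Y)\otimes K(\ell^2(\N))$ together with the same approximate-unit criterion forces $\sup_{x}\pi_{\N}(g(x,0))<\infty$; applying $\Phi^{-1}$ to $1\otimes e_{0,0}$ likewise gives $\sup_{y}\pi_{\N}(g^{-1}(y,0))<\infty$. With all relevant $\N$-indices now bounded by some $N$, the test operator is a finite sum $\sum_{j,j'\le N}T^{(j,j')}\otimes e_{j,j'}$ with each $T^{(j,j')}$ of finite propagation, hence lies in $C_u^*(X)\otimes M_{N+1}(\C)$, and the contradiction argument (after passing to a subsequence making the $f'(y_k)$ distinct, which is possible because each fibre of $f'$ meets only finitely many relevant $y$) closes as before. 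With this emendation your proof of $(4)\Rightarrow(1)$, and hence of the Proposition, is complete.
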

\begin{proof}
$(1) \Rightarrow (2)$: It follows from \cite[Corollary~3.6]{MR1905840}.

$(2) \Leftrightarrow (3)$: It follows from \cite[Theorem~2.1]{MR3601549}.

$(3) \Rightarrow (4)$: This is clear.

$(4) \Rightarrow (3)$: It follows from \cite[Proposition~4.13]{MR2460017}, which is also true without any change for locally compact $\sigma$-compact Hausdorff topologically principal \'etale groupoids (see also \cite[Remark~5.1]{XL18}).

$(3) \Rightarrow (1)$: In particular, $C_c(G(X)\times \mathcal{R})$ and $C_c(G(Y)\times \mathcal{R})$ are $*$-isomorphic. Since $C_c(G(X)\times \mathcal{R})$ and $C_c(G(Y)\times \mathcal{R})$ are respectively nothing but $\C_s[X]$ and $\C_s[Y]$ as in \cite[Example~2.2]{MR3116573}, we complete the proof by \cite[Theorem~6.1]{MR3116573}.
\end{proof}

{\bf Acknowledgments}. The second-named author would like to thank Christian B\"onicke, Hannes Thiel and Jiawen Zhang for helpful and enlightening discussions.

\bibliographystyle{plain}
\bibliography{bib}

\end{document}